\newtheorem{theorem}{Theorem}[section]
\newtheorem{remark}{Remark}
\newtheorem{ex}{Example}
\newtheorem*{problem}{Problem}
\numberwithin{equation}{section}
\renewcommand{\emph}[1]{{\it#1}}
\newcommand{\xx}{\mathbf{x}} % independent variable
\newcommand{\yy}{\mathbf{y}} % dependent variable
\newcommand{\ff}{\mathbf{f}} % function
\newcommand{\cc}{c} % cost
\newcommand{\dd}{d} % degree
\newcommand{\XX}{X} % domain
\newcommand{\YY}{Y} % image
\newcommand{\R}{\mathbb{R}} % real numbers
\newcommand{\N}{\mathbb{N}} % integers
\newcommand{\M}{\mathbf{M}} % moment matrix
\newcommand{\K}{\mathbf{K}} % subset K
\title{\bf Graph recovery from incomplete moment information}
\thanks{Lasserre's research is partly supported by AI Interdisciplinary Institute ANITI funding, through the French ``Investing for the Future PIA3'' program under the Grant agreement n$^{\circ}$ANR-19-PI3A-0004.}
\begin{document}

\author{Didier Henrion$^{1,2}$, Jean Bernard Lasserre$^{1,3}$}

\footnotetext[1]{CNRS; LAAS; Universit\'e de Toulouse, 7 avenue du colonel Roche, F-31400 Toulouse, France. }
\footnotetext[2]{Faculty of Electrical Engineering, Czech Technical University in Prague, Technick\'a 2, CZ-16626 Prague, Czechia.}
\footnotetext[3]{Institute of Mathematics; Universit\'e de Toulouse, 118 route de Narbonne, F-31062 Toulouse, France. }

\begin{abstract}
We investigate a class of moment problems, namely recovering a measure supported on the 
graph of a function from partial knowledge of its moments, as for instance in
some problems of optimal transport or density estimation. We show that the sole knowledge 
of first degree moments of the function, namely linear measurements, is sufficient to obtain asymptotically all the other moments
by solving a hierarchy of semidefinite relaxations (viewed as moment matrix completion problems) with a specific sparsity inducing criterion related to
a weighted $\ell_1$-norm of 
the moment sequence of the measure. The resulting
sequence of optimal solutions converges to the whole moment sequence of the measure which is shown to be the unique optimal solution of a certain infinite-dimensional linear optimization problem (LP). Then one may recover 
the function by a recent extraction algorithm based on the Christoffel-Darboux kernel associated with the measure.
Finally, the support of such a measure supported on a graph is a meager, very thin (hence sparse) set.
Therefore the LP on measures with this sparsity inducing criterion can be interpreted as  
an analogue for infinite-dimensional  signals of the LP in super-resolution for (sparse) atomic
signals.\\[1em]
{\sc Keywords:} Moment problem, Inverse problem,  Sparse signals, Semidefinite programming.\\
{\sc MSC:} 44A60 65D15 68W25 94A12 90C22
\end{abstract}

\date{Draft of \today}

\maketitle

\section{Inverse problem: from moments to graph}

In data science, it is often relevant to process moments of a signal instead of the signal itself. For complex valued signals, the moments are Fourier coefficients, and many filtering operations are efficiently carried out in the sequence of moments. In numerical approximation algorithms, many operations on real valued signals are more efficiently implemented in their
sequence of Chebyshev coefficients \cite{trefethen}. In the moment-SOS hierarchy approach, many nonlinear nonconvex problems are reformulated and solved approximately in the sequence of moments; see \cite{l10,khl20} and references therein. Once moments or approximate moments have been computed, one is faced with the inverse problem of reconstructing the signal from its moments.

The recent work \cite{momgraph} describes an algorithm based on the Christoffel-Darboux kernel, to recover the graph of a function from knowledge of its moments. Its novelty (and distinguishing feature) 
is to approximate the \emph{graph} of the 
function (rather than the function itself) with a \emph{semialgebraic} function (namely a minimizer of a sum of squares of polynomials) with $L_1$ and pointwise convergence guarantees for an increasing number of input moments. In contrast to approximations via continuous functions (e.g. polynomials) 
this larger class of approximants permits to better handle nasty functions (e.g. with discontinuities). In particular, this distinguishing 
feature allows to sometimes avoid a typical Gibbs phenomenon (as well as oscillations) encountered when approximating a discontinuous function by polynomials;
for more details and illustrative examples the interested reader is referred to \cite{momgraph}.

Let $\N:=\{0,1,2,\ldots\}$ be the set of natural numbers.
The (algebraic) moment of degree $\dd:=(\dd_x,\dd_y)\in \N^{n_x}\times\N^{n_y}$ of the Young measure supported on the graph of a given mapping  $\ff : \XX \to \YY$ from a domain $\XX \subset \R^{n_x}$  to a domain $\YY \subset \R^{n_y}$ is the real number
\begin{equation}
    \label{aux1}
\int_\XX \xx^{\dd_x} \ff(\xx)^{\dd_y}\,d\lambda(\xx)\,,\quad (d_x,d_y)\in\N^{n_x}\times\N^{n_y}\,,
\end{equation}
where we have used the multi-index monomial notation\footnote{For numerical reasons, it is preferable to express
moments in a basis different from the monomial basis. However, in this note we stick to monomials for notational ease.}. In \eqref{aux1} $\lambda$ is a given reference measure, for example the uniform measure on $\XX$.

In applications treated in \cite{momgraph} to recover $\ff$, all moments \eqref{aux1} 
are available; more precisely, in principle an arbitrarily large number of them can be computed. However in a number of other interesting applications
this is not the case. For instance:
\begin{itemize}
\item In the moment-SOS approach for solving or controlling differential equations, only finitely many (approximate) moments are given, i.e. $|d|\leq r$ where $r \in \N$ is a given relaxation order, and $|d|$ denotes the sum of the entries of vector $d$;
\item In statistics for density estimation ($n_y=1$), only finitely many moments 
$\int \xx^{d_x} f(\xx)\,d\xx$, $d_x\in \N^{n_x}$,
of the unknown density function $f\geq 0$ are available, i.e. $\dd_y = 1$;
\item In optimal transport, only marginal moments are available, i.e. either $|\dd_x|=0$ or $|\dd_y|=0$.
\end{itemize}

The main feature of this paper is to consider the more general inverse problem of recovering the graph of a function $\ff$ given some of its moments, indexed by $\dd \in D$ in a given countable index set $D \subset \N^{n_x}\times\N^{n_y}$. The following \emph{optimization / relaxation / concentration} features combined with the recovery technique described in \cite{momgraph},
are key ingredients for a successful recovery of $\ff$.

\subsection*{Optimization}
In order to approximate the function $\ff$ from some
of its moments \eqref{aux1}, an integral functional
\[
\int_\XX \cc(\xx,\ff(\xx))\,d\lambda(\xx)
\]
of the solution is typically minimized, where $\cc : \XX \times \YY \to \R$ is a running cost. In some applications, the cost is given and there is a unique function solving the problem.  Otherwise, the cost is chosen so that a specific function is recovered.
The resulting optimization problem reads
\begin{equation}\label{opt}
\inf_{\ff} \int_\XX \cc(\xx,\ff(\xx))\,d\lambda(\xx)
\quad\mathrm{s.t.} \quad \int_\XX \xx^{\dd_x} \ff(\xx)^{\dd_y}\,d\lambda(\xx) = z_d, \quad d \in D
\end{equation}
where the sequence $(z_d)_{d\in D} \subset \R$ and the index set $D$ are given. Function $\ff$ is sought in a given functional space, for example square integrable functions, uniformly bounded functions, or measurable functions.

\subsection*{Relaxation} 
Optimization problem \eqref{opt} is nonlinear, nonconvex and quite difficult in general. Following Kantorovich's idea proposed for transport problems \cite[Introduction to optimal transport]{santambrogio}, one can instead consider the 
relaxed version
\begin{equation}\label{relax}
\inf_{\mu_\xx} \int_{\XX\times\YY} \cc(\xx,\yy)\mu_\xx(d\yy)\lambda(d\xx)
\quad\mathrm{s.t.} \quad \int_{\XX\times\YY} \xx^{\dd_x} \yy^{\dd_y}\mu_\xx(d\yy)\lambda(d\xx) = z_d, \quad d \in D
\end{equation}
of the non convex problem \eqref{opt}. A distinguishing feature of the relaxation \eqref{relax} is to be \emph{linear} and hence convex in $\mu_\xx$, which is a probability measure on $\YY$ parametrized by $\xx \in \XX$, also called a conditional, or Young measure. Weak star compactness arguments can be used to show that under mild assumptions on the input data, the infimum in relaxed problem \eqref{relax} is attained. Letting $\mu_\xx=\delta_{\ff(\xx)}$ shows that the infimum in relaxed problem \eqref{relax} is less than or equal to the infimum in problem \eqref{opt}. Techniques of calculus of variations can be used to show that under mild assumptions on the input data, the minimum in relaxed problem \eqref{relax} is equal to the infimum in problem \eqref{opt}, i.e. that there is no relaxation gap.

\subsection*{Concentration}
For some specific choices of $c$ and $D$, it can be proved that $\mu_\xx=\delta_{\ff(\xx)}$ 
is the unique optimal solution of relaxed problem \eqref{relax},
and hence problem \eqref{opt} has a unique solution $\ff$. In this case we say that the Young measure $\mu_\xx$ is concentrated on the graph of function $\ff$.

For instance this happens in quadratic optimal transport for which the cost $c(\xx,\yy):=\frac{1}{2}\|\xx-\yy\|^2$ is given. Brenier showed that the optimal transport function is the gradient of a convex function; see e.g. \cite[Chapter 1]{santambrogio} and references therein. If there is freedom for the cost $c$, it can be chosen so that concentration occurs.

It also happens in the moment-SOS approach for solving a certain class of nonlinear partial differential equations, if additional linear constraints are enforced on the measure and/or the cost if chosen appropriately; see e.g. the recent  work on Burgers equation \cite{burgers}.

\subsection*{Contribution}
We consider problem \eqref{relax} 
in the case $|d_y|\leq 1$, i.e. the only available moments are those of the reference measure (zero-th order moments $d_y=0$) and averages of the graph (first order moments $d_y=1$, namely linear measurements of the function). A typical case is density estimation.

Our contribution is twofold: 
I. As for transport problem,
we show that the measure $\mu$ to recover is the unique optimal solution of a certain infinite-dimensional LP on a measure space. II. Next, under the assumption that the input data are semialgebraic, we provide a systematic numerical scheme to approximate as closely as desired any arbitrary finite number of moments of $\mu$. It consists of solving problem \eqref{relax} with the moment-SOS hierarchy. Since we are given an incomplete set of moments, we 
optimize over the remaining set of unknown moments. In other words, each semidefinite relaxation
is a \emph{moment matrix completion problem} of increasing size.

The rationale behind the asymptotic convergence of the hierarchy is that every semidefinite relaxation is 
a $r$-truncated version of the infinite-dimensional LP where one only 
considers pseudo-moments of order $r$. It turns out that as $r$ increases, the resulting sequence 
of optimal solutions converges to the vector of moments of $\mu$.

Interestingly this result can be interpreted as an 
analogue for infinite-dimensional signals (densities) of \emph{super-resolution} for sparse atomic measures \cite{dCG12,CF14,dC17}. Indeed
instead of considering the measure $d\nu(\xx)=\ff(\xx)\,d\xx$ on $\XX$ with unknown density 
$\ff$ (but with \emph{all} its moments available), we rather consider 
the graph density measure $\mu=\delta_{\ff(\xx)}(dy)\lambda(d\xx)$ whose support $(\xx,\ff(\xx))$ is a meager (very thin) degenerate subset of $\XX\times\YY$ (hence sparse), now with partial knowledge of its moments.
Then as for super-resolution of sparse atomic measures, this sparse (degenerate) measure is the unique solution of an infinite LP on measures where the sparsity-inducing cost to minimize is a weighted $\ell_1$-norm of their moment vector. 

Then once moments of $\mu$ up to order $2r$ have been 
approximated in an optimal solution at step $r$ of the hierarchy,
the function can  be in turn approximated 
with the reconstruction technique already alluded to and described in 
\cite{momgraph}; the deeper in the hierarchy the better is the resulting approximation. 

Our experiments convey the following message: In the above situations it is better to approximate the graph $G:=\{(\xx,f(\xx): \xx\in X)\}$ from moments of the measure $\mu$ supported on $G$ (a thin subset of $X\times Y$), rather than approximate the function  itself (e.g. $L^2$-norm approximation by polynomials) from moments
of the measure $d\nu=fd\xx$ on $X$ with density $f$. Even more, this is true even if most moments of $\mu$ have to be approximated,
whereas all exact moments of $\nu$ are available.

\section{Problem statement}
Let $\R[\xx]$ denote the ring of real-valued polynomials in the variables $\xx=(x_1,\ldots,x_n)$
and $\Sigma[\xx]\subset\R[\xx]$ its subset of sums-of-squares (in short SOS) polynomials.
Let $\XX\,:=\,\{\xx\in\R^n:\: g_j(\xx)\,\geq\,0,\:j=1,\ldots,m\,\}$
be a compact basic semi-algebraic set defined
for given polynomials $(g_j)\subset\R[\xx]$. 
With no loss of generality, we will assume that $\XX\subseteq[0,1]^n$ (possibly after some scaling and translation).

The convex cone of Borel regular non-negative measures on $\XX$, topologically dual to the convex cone of non-negative continuous functions on $\XX$, is denoted by $\mathscr{M}(\XX)_+$. The Borel sigma-algebra of subsets of $\XX$ is denoted by $\mathscr{B}(\XX)$.

Let $\lambda\in\mathscr{M}(\XX)_+$ be a finite probability measure with
moments $\boldsymbol{\lambda}=(\lambda_{d_x}) \subset \R$ defined by
\[
\lambda_{d_x}:=\int_X \xx^{d_x}d\lambda(\xx), \quad d_x \in \N^n.
\]
From now on, let us restrict our attention to scalar-valued functions $f:\XX\to\YY$. Vector-valued functions can be treated entry-wise.
Let $\mu\in\mathscr{M}(\XX\times\YY)_+$ be a finite probability measure of the form
 $d\mu(\xx,y)=\delta_{f(\xx)}(dy)\lambda(d\xx)$, i.e.
  \begin{equation}
     \label{measure:mu}
 \mu(A\times B)\,=\,\int_A\,1_B(f(\xx))\,d\lambda(\xx),\quad\forall A\in\mathscr{B}(\XX),\,B\in\mathscr{B}(\YY),
 \end{equation}
 with all finite moments 
 $\boldsymbol{\mu}=(\mu_d)_{d\in\mathbb{N}^{n+1}} \subset \R$ defined by
\[\mu_d\,:=\,\int_\XX \int_\YY \xx^{d_x}\,y^{d_y}\,d\mu(\xx,y)\,=\,
 \int_{\XX}\xx^{d_x}\,f(\xx)^{d_y}\,d\lambda(\xx), \quad d=(d_x,\:d_y)\in\N^{n+1}\,.\]
In particular $\mu_{d_x,0} = \lambda_{d_x}$ for all $d_x\in\N^n$.
We assume that $0\leq f\in \mathscr{L}^\infty(\lambda)$ and $\Vert f\Vert_\infty\leq \gamma$ for a given $\gamma>0$, i.e.
$\YY:=[0,\gamma]$. (The case where $f$ is bounded but can take negative values 
reduces to the previous case by taking $\tilde{f}:=f+\Vert f\Vert_\infty$.)
Let $\N^{n+1}_r:=\{d \in \N^{n+1} : |d|:=\sum_{k=1}^{n+1} d_k \leq r\}$.

\begin{problem}\label{pb-completion}{\bf Moment completion.}
Let $D:=\{d=(d_x,\,d_y) \in \N^{n+1} : d_y \leq 1\}$.
Given moments $(\mu_d)_{d \in D}$ and given $s\in\N$, approximate as closely as desired all moments $(\mu_d)_{d \in \N^{n+1}_{2s}}$.
\end{problem}

\subsection*{Further notations}~
Let $g_0(\xx):=1$, $g_{m+1}(\xx):=n-\Vert\xx\Vert^2$ and $g_{m+2}(y):=y(\gamma-y)$. As $g_{m+1}(\xx)\geq0$ defines a redundant constraint (since $X\subseteq [0,1]^n$), it can always be included 
in the definition of $\XX$.  
Then the quadratic module 
\begin{equation}    \label{Q(g)}
Q(g)\,:=\,\{\,\sum_{j=0}^{m+2}\sigma_j\,g_j\::\:\sigma_j\in\Sigma[\xx,y]\,\}\subset\R[\xx,y]\,,
\end{equation}
generated by the $g_j$ is Archimedean, where $\Sigma[\xx,y]$ denotes the convex cone of SOS polynomials in $\R[\xx,y]$.

For $j=0,1,\ldots,m+2$, let $d_j:=\lceil{\rm deg}(g_j)/2\rceil$, and express $g_j$ in the monomial basis as $g_j(\xx,y) = \sum_{d=(d_x,d_y) \in \N^{n+1}} {g_j}_d\, \xx^{d_x}y^{d_y}$. 

Next, define the \emph{localizing matrix} $\M_{s-d_j}(g_j\,\boldsymbol{\mu}))$ 
associated with $\boldsymbol{\mu}$ and $g_j$, as the real symmetric matrix of size $\binom{n+1+s-d_j}{n+1}$ whose entries indexed by $(d_r,d_c) \in\mathbb{N}^{n+1}_{s-d_j}\times\mathbb{N}^{n+1}_{s-d_j}$ are equal to $\sum_{d \in \N^{n+1}} {g_j}_d\, \mu_{d+d_r+d_c}$. In particular, for $j=0$, $g_0=1$, $d_0=0$ and $\M_s(\boldsymbol{\mu})$ is called the \emph{moment matrix} associated with $\boldsymbol{\mu}$.

\subsection*{A preliminary result}
The following result of independent interest provides a \emph{rationale} for the 
infinite-dimensional LP introduced in Section \ref{LP} as well as for the numerical scheme described  in Section \ref{sec:main} to recover the measure $\mu$ in \eqref{measure:mu} supported on the graph of $f$.

\begin{theorem}
\label{th1}
Let $\mu\in\mathscr{M}(\XX\times\YY)_+$ be as in \eqref{measure:mu} and let $\phi\in\mathscr{M}(\XX\times\YY)_+$ be such that all its moments 
$\boldsymbol{\phi}=(\phi_d)_{d\in\N^{n+1}}$ are finite, with
$\phi_{d_x,0}=\lambda_{d_x}$ and $\phi_{d_x,1}=\mu_{d_x,1}$ for all $d_x\in\N^n$. Then $\phi_d\geq\mu_d$ for all $d\in\N^{n+1}$. In particular,
\begin{equation}
\label{eq:trace}
    {\rm trace}\:\M_r(\boldsymbol{\mu})\,\leq\,{\rm trace}\:\M_r(\boldsymbol{\phi}),\quad\forall r\in\N.
\end{equation}
\end{theorem}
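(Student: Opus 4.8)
The plan is to disintegrate $\phi$ over its marginal on $\XX$, identify that marginal with $\lambda$, and then reduce the inequality $\phi_d\geq\mu_d$ to a fibrewise application of Jensen's inequality.

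First I would show that the image $\pi$ of $\phi$ under the projection $(\xx,y)\mapsto\xx$ is exactly $\lambda$: indeed $\int_\XX\xx^{d_x}\,d\pi(\xx)=\phi_{d_x,0}=\lambda_{d_x}$ for all $d_x\in\N^n$, and since $\XX\subset[0,1]^n$ is compact, polynomials are dense in $\mathscr{C}(\XX)$ (Stone--Weierstrass), so a measure on $\XX$ is uniquely determined by its moments; hence $\pi=\lambda$. By the disintegration theorem, applicable since $\XX\times\YY$ is a compact metric space, there is a $\lambda$-measurable family of probability measures $(\phi_\xx)_{\xx\in\XX}$ on $\YY=[0,\gamma]$ with $d\phi(\xx,y)=\phi_\xx(dy)\,\lambda(d\xx)$, so that
\[
\phi_d=\int_\XX\xx^{d_x}\Big(\int_\YY y^{d_y}\,\phi_\xx(dy)\Big)\,d\lambda(\xx),\qquad d=(d_x,d_y)\in\N^{n+1}.
\]

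Next I would exploit the matching of first-order moments. Let $b(\xx):=\int_\YY y\,\phi_\xx(dy)\in[0,\gamma]$, a bounded $\lambda$-measurable function. The hypothesis $\phi_{d_x,1}=\mu_{d_x,1}$ reads $\int_\XX\xx^{d_x}b(\xx)\,d\lambda(\xx)=\int_\XX\xx^{d_x}f(\xx)\,d\lambda(\xx)$ for all $d_x\in\N^n$, and density of polynomials in $\mathscr{L}^2(\lambda)$ (again using compactness of $\XX$) forces $b=f$ $\lambda$-almost everywhere. Then, for fixed $d=(d_x,d_y)$, the map $t\mapsto t^{d_y}$ being convex on $[0,\gamma]$ for every $d_y\in\N$ (trivially for $d_y\leq 1$), Jensen's inequality applied to the probability measure $\phi_\xx$ gives $\int_\YY y^{d_y}\,\phi_\xx(dy)\geq b(\xx)^{d_y}=f(\xx)^{d_y}$ for $\lambda$-a.e. $\xx$; multiplying by $\xx^{d_x}\geq 0$ (valid since $\XX\subset[0,1]^n$) and integrating against $\lambda$ yields, via the formula above, $\phi_d\geq\int_\XX\xx^{d_x}f(\xx)^{d_y}\,d\lambda(\xx)=\mu_d$. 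The trace inequality \eqref{eq:trace} then follows at once, since the diagonal entries of $\M_r(\boldsymbol{\mu})$ are the even-order moments $\mu_{2d_r}$, $d_r\in\N^{n+1}_r$, each dominated by the corresponding $\phi_{2d_r}$, whence ${\rm trace}\,\M_r(\boldsymbol{\mu})=\sum_{d_r\in\N^{n+1}_r}\mu_{2d_r}\leq\sum_{d_r\in\N^{n+1}_r}\phi_{2d_r}={\rm trace}\,\M_r(\boldsymbol{\phi})$.

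The main obstacle is really just the measure-theoretic bookkeeping around the disintegration: one must ensure the family $(\phi_\xx)$ and the barycenter $b$ are genuinely measurable and that the Fubini-type identity for $\phi_d$ is licit. This is painless here because every fibre $\phi_\xx$ is a probability measure carried by the fixed compact interval $[0,\gamma]$, so all integrals in sight are bounded; the only substantive inputs are the compactness of $\XX$, used twice (for determinacy of the marginal and for $\mathscr{L}^2(\lambda)$-density of polynomials), and the one-line convexity estimate.
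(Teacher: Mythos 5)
Your proposal is correct and follows essentially the same route as the paper's own proof: identify the $\XX$-marginal of $\phi$ with $\lambda$ via moment determinacy on the compact $\XX$, disintegrate $\phi$ over $\lambda$, identify the fibre barycentre with $f$ $\lambda$-a.e.\ from the first-order moment matching, apply Jensen's inequality to the convex maps $t\mapsto t^{d_y}$ on $[0,\gamma]$, and sum the diagonal bound to get the trace inequality.
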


The proof of Theorem \ref{th1}  is postponed to Section \ref{proofs}.

\begin{remark}
\label{positive-0}
If $X\not\subset\R^n_+$ then we obtain the slightly weaker result $\phi_{2d_x,d_y}\geq\mu_{2d_x,d_y}$ for all 
$(2d_x,d_y)\in\N^{n+1}$, and therefore 
\eqref{eq:trace} still holds. 
\end{remark}

\section{An infinite-dimensional LP on measures}
\label{LP}
The moment-completion problem can be viewed as a way to embed the initial problem
into that of recovering 
a curve (more exactly the measure $\mu$ supported on the graph of that curve), i.e.,
a lower dimensional object with a meager, very thin, degenerate support 
in a space of measures on $\XX\times Y$ which can have support of full-dimension $n+1$. 
In other words this meager support 
$\{(\xx,f(\xx)): \xx\in\XX\}$ is sparse in $\XX\times Y$.

Therefore when viewing the problem 
with such glasses, Theorem \ref{th1} suggests a way to build up
an appropriate cost criterion defined over $\mathscr{M}(\XX\times Y)_+$ that
will select $\mu$ in \eqref{measure:mu} as the \emph{unique} optimal solution of a certain infinite-dimensional LP on $\mathscr{M}(\YY\times Y)_+$.

So let $\boldsymbol{\theta}=(\theta_d)_{d\in\N^{n+1}}$ be a nonnegative sequence 
of $\ell_1$, the Banach space of summable sequences $\mathbf{a}=(a_d)$ with norm $\Vert \mathbf{a}\Vert_1=\sum_{d\in\N^{n+1}}\vert a_d\vert$.
For instance choose
\begin{equation}
    \label{eq:theta}
\theta_{d=(d_x,d_y)}\,=\,
\left(c^{2d_y+1}\,{n-1+\vert d_x\vert\choose \vert d_x\vert}\right)^{-1}\quad \mbox{with $c>\max(1,\gamma)$.}\end{equation}
Recalling $Y=[0,\gamma]$, observe that $\phi_{d_x,d_y}=\displaystyle\int_{\XX\times Y}\xx^{d_x}\,y^{d_y}\,d\phi\,<\,\gamma^{d_y}$ for all $(d_x,d_y)\in\N^{n+1}$, and therefore
\begin{equation}    \label{def-T}
   {\mathscr T}(\boldsymbol{\phi})\,:=\,\sum_d\theta_d\,\phi_d\,<\,
\sum_{\vert d_x\vert=0}^\infty\sum_{d_y} \frac{\gamma^{d_y}}{c^{2d_y+1}\,{n-1+\vert d_x\vert\choose \vert d_x\vert}}\,<\,\frac{1}{c}\,
\sum_{d_y} (\gamma/c^2)^{d_y}\,<\,\frac{1}{c-\gamma/c}\,,\end{equation}
for all $\phi\in\mathscr{M}(\XX\times Y)_+$.
Next, with $\theta$ as in \eqref{eq:theta} let $\mathrm{T}:\XX\times\YY\to\R_+$,
\[(\xx,y)\,\mapsto\, \mathrm{T}(\xx,y)\,:=\,\sum_{d=(d_x,d_y)\in\N^{n+1}}\theta_d \,\xx^{d_x}\,y^{d_y} \,<\,\infty.\]
Observe that for all $(\xx,y)\in\XX\times\YY$,
\[0\,\leq\,\sum_{d=(dx,dy)\leq r}\theta_d\,\xx^{d_x}\,y^{d_y}
\,=:\,\mathrm{T}_r(\xx,y)\,\uparrow\,\mathrm{T}(\xx,y)\quad\mbox{as $r\to\infty$}.\]
Hence by Monotone Convergence \cite[p.44]{ash},
\begin{equation}
    \label{monotone}
\mathscr{T}(\boldsymbol{\phi})=\sum_{d\in\N^{n+1}}\theta_d\,\phi_d\,=\,\lim_{r\to\infty}\sum_{\vert d\vert\leq r}\theta_d\,\phi_d\,=\,
\lim_{r\to\infty} \int\mathrm{T}_r(\xx,y)\,d\phi\,=\,
\int\mathrm{T}(\xx,y)\,d\phi,\end{equation}
for all $\phi\in\mathscr{M}(\XX\times Y)_+$.

\subsection{An LP on measures}

Consider the infinite-dimensional LP:
\begin{equation}
\label{relax-primal-infty}
    \begin{array}{rl}
    \tau'_\infty=\displaystyle\inf_{\phi\in\mathscr{M}(\XX\times\YY)_+} & \displaystyle\int \mathrm{T}\,d\phi\\
    \mbox{s.t.} 
    &\displaystyle\int \xx^{d_x}\,y^{d_y}\,d\phi \, =\,
    \mu_d,\quad d=(d_x,d_y) \in D\,.
        \end{array}
\end{equation}
\begin{theorem}
\label{th-final}
The measure $\mu=\delta_{f(\xx)}(dy)\,\lambda(d\xx)\in\mathscr{M}(\XX\times\YY)_+$ is the unique optimal solution of the infinite-dimensional LP \eqref{relax-primal-infty}.
\end{theorem}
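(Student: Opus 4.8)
The plan is to read off optimality of $\mu$ directly from the moment-domination inequality of Theorem~\ref{th1}, and then to upgrade optimality to \emph{uniqueness} using the strict positivity of the weights $\boldsymbol{\theta}$ together with the determinacy of the moment problem on the compact set $\XX\times\YY$.

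First I would check that $\mu$ is admissible for \eqref{relax-primal-infty}. Since $D$ consists of all multi-indices with $d_y\leq1$, the constraints prescribe exactly $\int \xx^{d_x}\,d\phi=\lambda_{d_x}$ and $\int \xx^{d_x}y\,d\phi=\mu_{d_x,1}$ for all $d_x\in\N^n$, which $\mu=\delta_{f(\xx)}(dy)\,\lambda(d\xx)$ satisfies by \eqref{mom-mu}; moreover $\int\mathrm{T}\,d\mu=\mathscr{T}(\boldsymbol{\mu})<\infty$ by \eqref{def-T}, so the problem is feasible with finite value and $\tau'_\infty\leq\mathscr{T}(\boldsymbol{\mu})$.

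Next, take an arbitrary admissible $\phi\in\mathscr{M}(\XX\times\YY)_+$. The constraint indexed by $d=(0,0)\in D$ forces $\phi$ to be a probability measure, and since $\XX\times\YY$ is compact all its moments are finite, so $\phi$ meets the hypotheses of Theorem~\ref{th1} (here $\XX\subset[0,1]^n\subset\R^n_+$, so we obtain the full conclusion, not merely Remark~\ref{positive-0}): the constraints for $d_y=0$ and $d_y=1$ are precisely $\phi_{d_x,0}=\lambda_{d_x}$ and $\phi_{d_x,1}=\mu_{d_x,1}$. Hence $\phi_d\geq\mu_d$ for every $d\in\N^{n+1}$, and since the weights in \eqref{eq:theta} are strictly positive, the Monotone-Convergence identity \eqref{monotone} yields
\[
\int\mathrm{T}\,d\phi\,=\,\sum_{d}\theta_d\,\phi_d\,\geq\,\sum_{d}\theta_d\,\mu_d\,=\,\int\mathrm{T}\,d\mu.
\]
Thus $\mu$ attains the infimum and $\tau'_\infty=\mathscr{T}(\boldsymbol{\mu})$.

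Finally, for uniqueness I would observe that if $\phi$ is any optimal solution, the displayed inequality is an equality, i.e.\ $\sum_{d}\theta_d(\phi_d-\mu_d)=0$ with each summand nonnegative (Theorem~\ref{th1}) and $\theta_d>0$; hence $\phi_d=\mu_d$ for all $d$, so $\phi$ and $\mu$ have the same moment sequence. Compactness of $\XX\times\YY$ together with Stone--Weierstrass (polynomials dense in $\mathscr{C}(\XX\times\YY)$) gives determinacy of the moment problem, so $\phi=\mu$. The only genuinely load-bearing step is Theorem~\ref{th1}; everything else is bookkeeping---verifying that admissibility automatically supplies a probability measure with finite moments so Theorem~\ref{th1} applies verbatim, recording the strict positivity of $\boldsymbol{\theta}$ (which is exactly what converts equality of the weighted sums into termwise equality, hence into uniqueness rather than mere optimality), and invoking compactness to pass from equality of all moments to equality of the measures.
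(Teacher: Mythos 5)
Your proposal is correct and follows essentially the same route as the paper: feasibility of $\mu$, the inequality $\int\mathrm{T}\,d\phi\geq\int\mathrm{T}\,d\mu$ via Theorem~\ref{th1} and the Monotone Convergence identity \eqref{monotone}, and then uniqueness from moment determinacy on the compact set $\XX\times\YY$. If anything, you fill in a step that the paper leaves implicit: the paper simply says ``As measures on compact sets are moment determinate, uniqueness follows,'' whereas you correctly record that the strict positivity of the weights $\theta_d$ in \eqref{eq:theta}, combined with the termwise nonnegativity $\phi_d-\mu_d\geq0$ from Theorem~\ref{th1}, is precisely what forces the optimal $\phi$ to share \emph{all} moments with $\mu$ before determinacy can be invoked.
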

\begin{proof}
Let $\phi\in\mathscr{M}(\XX\times\YY)_+$ be an arbitrary  feasible solution of \eqref{relax-primal-infty}, with moments
$(\phi_d)_{d\in\N^{n+1}}$. Then:
\begin{eqnarray*}
\displaystyle\int \mathrm{T}(\xx,y)\,d\phi
&=&\sum_{d=(d_x,d_y)\in\N^{n+1}}\theta_d \,\phi_d\quad\mbox{[by \eqref{monotone}]}\\
&\geq&\sum_{d=(d_x,d_y)\in\N^{n+1}}\theta_d \,\mu_d\quad\mbox{[by Theorem \ref{th1}]}\\
&=&\displaystyle\int \mathrm{T}(\xx,y)\,d\mu\,,\quad\mbox{[by \eqref{monotone}]},
\end{eqnarray*}
which implies that  $\mu$ is an optimal solution of LP \eqref{relax-primal-infty}. As measures on compact sets are moment determinate, uniqueness follows.
\end{proof}
Notice that the criterion $\int\mathrm{T}\,d\phi=\mathscr{T}(\boldsymbol{\phi})$ (a weighted $\ell_1$-norm of the moment vector of $\phi$) is \emph{sparsity-inducing}. Indeed it induces a sparse support for the unique optimal
solution of LP \eqref{relax-primal-infty}. Again, by sparse we mean 
that the support of $\mu$ is degenerate and a meager (thin) set of $\XX\times\YY$ (since it is the graph $(\xx,f(\xx))$).

\begin{remark}
A parallel can be drawn with super-resolution in signal processing \cite{dCG12,CF14,dC17}, which consists of recovering an atomic measure with sparse finite support (a few atoms), from the sole knowledge of only a limited number of its moments.
Indeed as shown in \cite{CF14}, the sparse atomic measure (signal) 
to recover is the unique optimal solution of an LP on measures with (spasity-inducing) total-variation criterion. In addition, solving the latter LP reduces to solving a single SDP in the univariate case.

In our graph-recovering inverse problem, our knowledge is also limited to
a few moments only, namely $(\mu_{d,0}, \mu_{d,1})_{d\in\N^n}$ out
of the potentially many $(\mu_{d_x,d_y})_{(d_x,d_y)\in\N^{n+1}}$. (As the object to recover is infinite-dimensional
one cannot expect exact recovery from finitely many moments.)
Hence the spirit of the infinite-dimensional LP \eqref{relax-primal-infty} 
is that a measure with sparse support (the graph of a function)
can be recovered from a few moments only, as the unique optimal solution of an LP on measures with appropriate sparsity-inducing criterion.
\end{remark}

\subsection{A dual of \eqref{relax-primal-infty}}

Let $\mathscr{C}(\XX\times Y)$ be the space of continuous functions on $\XX\times Y$.
Observe that $\mathrm{T}\in\mathscr{C}(\XX\times Y)$. Indeed
let $(\xx_n,y_n)_{n\in\N}\subset \XX\times Y$ be such that
$(\xx_n,y_n)\to (\xx,y)\in \XX\times Y$, as $n\to\infty$.
Let $a^n_d:=\xx^{d_x}_n\,y^{d_y}_n$ for all $n$ and all $d\in\N^{n+1}$,
so that for all $d$, $a^n_d\to a_d:=\xx^{d_x}\,y^{d_y}$ as $n\to\infty$.
Moreover $\vert a^n_d\vert\leq y^{d_y}$ for all $n$, and 
$\sum_d \theta_d\,y^{d_y}<\infty$. Therefore 
by Dominated Convergence \cite[p. 49]{ash}:
\[\mathrm{T}(\xx_n,y_n)\,=\,\sum_d \theta_d \,\xx^{d_x}_n\,y^{d_y}_n\,\to
\sum_d \theta_d\, \xx^{d_x}\,y^{d_y}\,=\,\mathrm{T}(\xx,y)\,\quad\mbox{as $n\to\infty$}.\]
Next, consider the infinite-dimensional LP problem:
\begin{equation}
\label{relax-primal-infty-dual}
\begin{array}{rl}
\tau^*_\infty=\displaystyle\sup_{h_1,h_2\in\mathscr{C}(\XX)}&\displaystyle\int_{\XX} (h_1(\xx) +h_2(\xx)\,f(\xx)) \,d\xx\\
&h_1(\xx)+h_2(\xx)\,y\,\leq\,\mathrm{T}(\xx,y)\,,\quad \forall (\xx,y)\in\XX\times Y\,.\end{array}
\end{equation}

\begin{theorem}
\label{duality-gap}
There is no duality gap between \eqref{relax-primal-infty} and its dual \eqref{relax-primal-infty-dual}.
\end{theorem}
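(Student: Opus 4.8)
The plan is to establish weak duality first and then close the gap by an approximation/compactness argument, following the standard template for infinite-dimensional LP duality on spaces of measures.

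\textbf{Weak duality.}  First I would check $\tau^*_\infty \le \tau'_\infty$. Take any pair $(h_1,h_2)\in\mathscr{C}(\XX\times Y)^2$ feasible for \eqref{relax-primal-infty-dual} and any $\phi\in\mathscr{M}(\XX,\YY)_+$ feasible for \eqref{relax-primal-infty}. Integrate the pointwise inequality $h_1(\xx)+h_2(\xx)y\le \mathrm{T}(\xx,y)$ against $\phi$: the left-hand side integrates to $\int h_1\,d\lambda + \int h_2\,f\,d\lambda$ because the moment constraints force the $y^0$- and $y^1$-marginals of $\phi$ to match those of $\mu=\delta_{f(\xx)}(dy)\lambda(d\xx)$ (here one uses $\phi_{d_x,0}=\lambda_{d_x}$, $\phi_{d_x,1}=\mu_{d_x,1}$ only at the level $|d_x|\in\{0,1,\dots\}$, but since $h_i$ are continuous on a compact set they are uniform limits of polynomials, so matching all polynomial moments of degrees $0$ and $1$ in $y$ suffices); the right-hand side integrates to $\int \mathrm{T}\,d\phi$, which is finite by \eqref{def-T}. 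Taking the supremum over dual-feasible $(h_1,h_2)$ and the infimum over primal-feasible $\phi$ gives $\tau^*_\infty\le\tau'_\infty$.

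\textbf{Closing the gap.}  For the reverse inequality the plan is to invoke a standard zero-duality-gap theorem for conic linear programs — e.g. the Anderson--Nash framework, or the version tailored to moment problems in Lasserre's book — whose hypotheses are (i) the primal value $\tau'_\infty$ is finite and (ii) the primal problem is ``consistent'' in a suitable topological sense, typically that the feasible set, viewed as a subset of $\mathscr{M}(\XX\times\YY)_+$ equipped with the weak-$\star$ topology, is nonempty and the moment map has weak-$\star$ closed range, together with a Slater-type interior-point or continuity condition on the objective. Here $\mathscr{M}(\XX\times\YY)_+$ is the dual of $\mathscr{C}(\XX\times\YY)$; the objective $\phi\mapsto\int\mathrm{T}\,d\phi$ is weak-$\star$ continuous since $\mathrm{T}\in\mathscr{C}(\XX\times\YY)$ (established in the excerpt just before the statement); the primal is feasible (take $\phi=\mu$, which also shows $\tau'_\infty<\infty$); and the constraint functions $(\xx,y)\mapsto \xx^{d_x}y^{d_y}$ for $d\in D$ all lie in $\mathscr{C}(\XX\times\YY)$. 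Compactness of $\XX\times\YY$ then gives, via Banach--Alaoglu, that bounded subsets of $\mathscr{M}(\XX\times\YY)_+$ are weak-$\star$ sequentially compact, which is exactly what is needed to verify the closed-range/attainment hypothesis. I would therefore state the result as a consequence of such a theorem, checking its hypotheses one by one.

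\textbf{Expected main obstacle.}  The delicate point is the infinitude of the objective: $\mathrm{T}$ is a convergent power series, not a polynomial, so one cannot directly reduce to a finite-dimensional moment problem, and one must make sure the continuity of $\mathrm{T}$ (already proved via Dominated Convergence) is genuinely enough for the abstract duality theorem — in particular that the dual variables can be taken in $\mathscr{C}(\XX\times\YY)$ rather than in a larger space. A clean route is to approximate: let $\mathrm{T}_r$ be the degree-$r$ truncation, solve the finite-moment LP with objective $\int\mathrm{T}_r\,d\phi$ (a classical generalized moment problem with no duality gap), and pass to the limit $r\to\infty$ using $\mathrm{T}_r\uparrow\mathrm{T}$, monotone convergence on the primal side, and a uniform-boundedness/equicontinuity argument to extract convergent dual optimizers; the uniform bound \eqref{def-T} on $\int\mathrm{T}\,d\phi$ and the Archimedean structure keep all the relevant sets bounded. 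The technical care needed to interchange this limit with the sup and inf — ensuring no value is lost in the passage to the limit — is where the real work lies.
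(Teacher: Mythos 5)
Your plan is in the same spirit as the paper's argument: both reduce the absence of a duality gap to a topological criterion for conic linear programs, and both rely on weak-star sequential compactness of norm-bounded sets of measures on a compact space. The paper's specific reference is Barvinok's Theorem~IV.7.2, which requires the weak-star closedness of the image cone
\[
\mathcal{M}:=\bigl\{(\mathbf{A}_1\phi,\mathbf{A}_2\phi,\langle \mathrm{T},\phi\rangle):\phi\in\mathscr{M}(\XX\times Y)_+\bigr\},
\]
and the proof verifies this directly: given a weak-star convergent image sequence $(\mathbf{A}_1\phi_n,\mathbf{A}_2\phi_n,\langle \mathrm{T},\phi_n\rangle)\to(a,b,c)$, test the first component against $h\equiv1$ to obtain $\phi_n(\XX\times Y)\to a(\XX)$, hence $(\phi_n)$ is norm-bounded; extract a weak-star convergent subsequence $\phi_{n_k}\to\psi$; and identify $a=\mathbf{A}_1\psi$, $b=\mathbf{A}_2\psi$, $c=\langle\mathrm{T},\psi\rangle$ using $\mathbf{A}_i^*h\in\mathscr{C}(\XX\times Y)$ and $\mathrm{T}\in\mathscr{C}(\XX\times Y)$.

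Two things keep your sketch from being a proof. First, and principally, you do not carry out this verification, and the one ingredient you do name (Banach--Alaoglu gives weak-star sequential compactness of \emph{bounded} subsets of $\mathscr{M}(\XX\times Y)_+$) does not apply off the shelf: in the closedness argument the measures $\phi_n$ range over \emph{all} of $\mathscr{M}(\XX\times Y)_+$, not merely over the primal-feasible set, so they carry no a priori mass bound. The bound has to be read off from the convergence of the image sequence itself (by testing $\mathbf{A}_1\phi_n$ against the constant function $1$), and this is precisely the step missing from your outline. Second, you hedge among several sufficient conditions (Anderson--Nash ``consistency,'' a ``Slater-type interior-point'' hypothesis, ``attainment''); a Slater condition plays no role here, and what is needed is closedness of the image set, not attainment of the primal optimum. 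Your fallback route---replace $\mathrm{T}$ by a polynomial truncation $\mathrm{T}_r$, invoke duality for the resulting finite generalized moment problem, and let $r\to\infty$---is a genuinely different strategy and could plausibly be made to work, but as written it only names the obstruction (interchanging the $r\to\infty$ limit with the $\sup$ and $\inf$, and extracting convergent dual optimizers) without resolving it. The weak-duality half of your argument, via Stone--Weierstrass approximation of $h_1,h_2$ by polynomials and integration of the pointwise inequality against $\phi$, is fine.
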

\begin{proof}
The LP \eqref{relax-primal-infty} reads:
\[\tau'_\infty=\displaystyle\inf_{\phi\in\mathscr{M}(\XX\times Y)_+} \{\,\langle \,\mathrm{T},\phi\,\rangle: \mathbf{A}_1\,\phi= \lambda\,;\:
\mathbf{A}_2\,\phi= \mathbf{A}_2\mu\,\},\]
where $\mathbf{A}_i:\mathscr{M}(\XX\times Y)\to\mathscr{M}(\XX)$, $i=1,2$,  are defined by:
\begin{eqnarray*}
\phi\mapsto \mathbf{A}_1\,\phi(B)&=&\int_{B \times Y}d\phi\quad B\in\mathcal{B}(\XX)\\
\phi\mapsto \mathbf{A}_2\,\phi(B)&=&\int _{B\times Y}y\,d\phi(\xx,y),\quad B\in\mathcal{B}(\XX).\end{eqnarray*}
The adjoints $\mathbf{A}_i^*:
\mathscr{C}(\XX)\to\mathscr{C}(\XX\times Y)$ are defined by:
\[h\mapsto [\mathbf{A}_1^*\,h](\xx,y)\,=\,h(\xx)\,;\quad
h\mapsto [\mathbf{A}_2^*\,h](\xx,y)\,=\,
y\,h(\xx)\,,\quad \forall (\xx,y)\in\XX\times Y,\]
for all $h\in\mathscr{C}(\XX)$.
The dual \eqref{relax-primal-infty-dual} reads:
\[\begin{array}{rl}
\tau^*_\infty=\displaystyle\sup_{h_1,h_2\in \mathscr{C}(\XX)} &
\{\langle h_1,\lambda\rangle+\langle h_2,\mathbf{A}_2\,\mu\rangle:\\ &\mathrm{T}-\mathbf{A}^*_1\,h_1{\color{red}-}\mathbf{A}^*_2\,h_2\in \mathscr{C}(\XX\times Y)_+\,\}
\end{array}\]
Equivalently:
\[\begin{array}{rl}
\tau^*_\infty=\displaystyle\sup_{h_1,h_2\in \mathscr{C}(\XX)} &
\{\displaystyle\int_{\XX}(h_1(\xx)+h_2(\xx) \,f(\xx))\,d\xx\,:\\ 
& h_1(\xx)+y\,h_2(\xx)\,\leq\,\mathrm{T}(\xx,y)\,,\quad\forall (\xx,y)
\in \XX\times Y\}.\end{array}\]
To prove that $\tau'_\infty=\tau^*_\infty$ it suffices to prove that
the set
\[\mathcal{M}:=\{(\mathbf{A}_1\,\phi,\mathbf{A}_2\,\phi,\langle \mathrm{T},\phi\rangle):\:\phi\in\mathscr{M}(\XX\times Y)_+\}\]
is closed in the weak-star topology
$\sigma(\mathscr{M}(\XX)^2\times \R,\mathscr{C}(\XX)^2\times \R$), see \cite[Theorem IV.7.2]{barvinok}.

So let $(\mathbf{A}_1\,\phi_n,\mathbf{A}_2\,\phi_n,\langle \mathrm{T},\phi_n\rangle)\to(a,b,c)$  as $n\to\infty$. From the definition of $\mathbf{A}_1$ and the weak-star convergence, one obtains 
$\phi_n(\XX\times Y)\to a(\XX\times Y)\geq0$. Hence the sequence $(\phi_n)_{n\in\N}$ is norm-bounded. As $\XX\times Y$ is compact there is a measure 
$\psi\in\mathscr{M}(\XX\times Y)_+$ and a subsequence $(n_k)_{k\in\N}$ such that
$\phi_{n_k}\to\psi$ (weak-star) as $k\to\infty$. But then
\[\langle h,a\rangle\,=\,\lim_{k\to\infty}\,\langle h,\mathbf{A}_1\,\phi_{n_k}\rangle\,=\,
\lim_{k\to\infty}\,\langle \underbrace{\mathbf{A}_1^*\,h}_{\in\mathscr{C}(\XX\times Y)},\phi_{n_k}\rangle\,=\,\langle \mathbf{A}_1^*\,h,\psi\rangle=\langle h,\mathbf{A}_1\,\psi\rangle,\quad\forall h\in\mathscr{C}(\XX),\]
which implies $a=\mathbf{A}_1\,\psi$. Similarly,
\[\langle h,b\rangle\,=\,\lim_{k\to\infty}\,\langle h,\mathbf{A}_2\,\phi_{n_k}\rangle\,=\,
\lim_{k\to\infty}\,\langle \underbrace{\mathbf{A}_2^*\,h}_{\in\mathscr{C}(\XX\times Y)},\phi_{n_k}\rangle\,=\,\langle \mathbf{A}_2^*\,h,\psi\rangle=\langle h,\mathbf{A}_2\,\psi\rangle,\quad\forall h\in\mathscr{C}(\XX),\]
which implies $b=\mathbf{A}_2\,\psi$. Finally, 
as $\mathrm{T}\in\mathscr{C}(X\times Y)$, $c=\lim_{k\to\infty}\,\langle \mathrm{T},\phi_{n_k}\rangle=
\langle \mathrm{T},\psi\rangle$
and therefore $(a,b,c)=(\mathbf{A}_1\,\psi,\mathbf{A}_2\,\psi,\langle \mathrm{T},\psi\rangle)$ for some $\psi\in\mathscr{M}(\XX\times Y)_+$, the desired result.
\end{proof}
So as for super-resolution for atomic measure
where the dual problem provides a certificate, the dual 
\eqref{relax-primal-infty-dual} indicates what would be a certificate
if it has an optimal solution.

Indeed assume that $h^*_1,h^*_2\in\mathscr{C}(\XX)$ is an optimal solution
of \eqref{relax-primal-infty-dual}. Then
\[\int_\XX (\mathrm{T}(\xx,f(\xx))-h^*_1(\xx)-h^*_2(\xx)\,f(\xx))\,d\lambda(\xx)\,=\,0.\]
Equivalently:
\begin{eqnarray*}
\mathrm{T}(\xx,f(\xx))&=&h^*_1(\xx)+h^*_2(\xx)\,f(\xx),\quad \mbox{a.e. in $\XX$.}\\
\mathrm{T}(\xx,y)&\geq&h^*_1(\xx)+h^*_2(\xx)\,y,\quad \forall (\xx,y)\in\XX\times Y.
\end{eqnarray*}
\begin{ex}
$\XX=[0,1]\,;\: Y=[0,1]$ and $f(\xx)=1$. We can take
\[\mathrm{T}(\xx,y)\,=\,\sum_{k=0}^\infty \sum_{j=0}^\infty \frac{y^j}{j{\rm !}}\frac{\xx^k}{k{\rm !}}\,=\,
\exp(\xx)\,\exp(y)\,,\quad (\xx,y)\in [0,1]\times [0,1].\]
Then with $h^*_1=0$, $h^*_2:=e\cdot\exp(\xx)$ we obtain
\[\mathrm{T}(\xx,f(\xx))=e\cdot\exp(\xx)\,=\,f(\xx)\cdot h^*_2(\xx)\,;\quad 
\mathrm{T}(\xx,y)\,=\,\exp(\xx)\cdot\exp(y)\,\geq\,e\cdot\exp(\xx)\,y\]
as $\exp(y)\geq e\,y$ for all $y\in [0,1]$.
\end{ex}
Unfortunately solving the infinite-dimensional LP \eqref{relax-primal-infty} is 
out of reach. However Theorem \ref{th1} still suggests a practical numerical scheme
which in principle allow to approximate as closely as desired moments, any finite number of moments of $\mu$ in \eqref{measure:mu}.

\section{Asymptotic Recovery}
\label{sec:main}
In this section we provide a systematic numerical scheme to recover
asymptotically either an arbitrary (but fixed) number of moments of $\mu$, or
all moments of $\mu$.

\subsection{Recovery of finitely many moments}
Given an integer $s\in\N$ fixed, and a sequence 
$\boldsymbol{\phi}=(\phi_d)_{d\in\N^{n+1}}$, let 
\[{\mathscr T}_s(\boldsymbol{\phi})\,:=\,\sum_{d\in\N^{n+1}_{s}}\phi_{2d}\,=\,{\rm trace}(\M_s(\boldsymbol{\phi})),\]
and consider the following hierarchy of semidefinite programs:
\begin{equation}
\label{relax-primal}
    \begin{array}{rl}
    \tau_r=\displaystyle\inf_{\boldsymbol{\phi}}&
    {\mathscr T}_s(\boldsymbol{\phi})\\
    \mbox{s.t.} 
    &\M_r(\boldsymbol{\phi})\,\succeq0\,,\quad \M_{r-d_j}(g_j\,\boldsymbol{\phi})\succeq0,\quad j=1,\ldots,m+2,\\
    &\phi_d \, =\,\mu_d,\quad d \in D,
        \end{array}
\end{equation}
indexed by the relaxation order $r\geq d_0=\max(s,\max_j d_j)$, and where 
$\boldsymbol{\phi}=(\phi_d)_{d\in\N^{n+1}_{2r}}$.
\begin{theorem}
\label{th-finite}
The semidefinite program \eqref{relax-primal} has an optimal solution $\boldsymbol{\phi}^r=(\phi^r_d)_{d\in\N^{n+1}_{2r}}$, and  $\tau_r={\mathscr T}_s(\boldsymbol{\phi}^r)\leq {\mathscr T}_s(\boldsymbol{\mu})$ for all $r \geq d_0$. 
In addition, $\lim_{r\to\infty} {\mathscr T}_s(\boldsymbol{\phi}^r)={\mathscr T}_s(\boldsymbol{\mu})$ and
\begin{equation}
    \label{th-finite-completion}
    \lim_{r\to\infty}\phi^r_d\,=\,\mu_d\quad\forall d\in\N^{n+1}_{2s}.
\end{equation}
Moreover if $\nu$ is a measure on $\XX\times \YY$ such that $\nu_d=\mu_d$ for all $d\in D$ then
${\mathscr T}_s(\boldsymbol{\mu})\leq {\mathscr T}_s(\boldsymbol{\nu})$.

\end{theorem}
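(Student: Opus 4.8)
The plan is to establish the four assertions of Theorem \ref{th-finite} in sequence, building on Theorem \ref{th1} and standard moment-SOS machinery.

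First, I would prove existence of an optimal solution and the bound $\tau_r \leq \mathscr{T}_s(\boldsymbol{\mu})$. Feasibility of the SDP \eqref{relax-primal} is immediate: the truncation $(\mu_d)_{d\in\N^{n+1}_{2r}}$ of the genuine moment sequence of $\mu$ is feasible, because $\mu$ is supported on $\XX\times\YY$ and so all its moment and localizing matrices $\M_r(\boldsymbol{\mu})$, $\M_{r-d_j}(g_j\boldsymbol{\mu})$ are positive semidefinite, and $\mu_d$ matches the data for $d\in D$. This also gives $\tau_r \leq \mathscr{T}_s(\boldsymbol{\mu})$. For existence of a minimizer I would argue that the feasible set is closed, and compactness follows from the Archimedean property of $Q(g)$: the constraint $\M_{r}(g_1\boldsymbol{\phi})\succeq 0$ with $g_1 = n - \|\xx\|^2$ together with $\M_{m+1}(g_{m+1}\boldsymbol{\phi})\succeq 0$ where $g_{m+1}=y(\gamma-y)$ forces all diagonal entries $\phi_{2d}$ to be uniformly bounded (a standard computation: $\phi_{2d_x,0}\leq \lambda$-normalization via $\phi_{0}=\lambda_0=1$ is fixed by $D$, and the localizing inequalities propagate boundedness), hence by positive semidefiniteness all entries are bounded, and the feasible set is a compact subset of a finite-dimensional space; the continuous linear objective $\mathscr{T}_s$ attains its infimum.

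Next, the convergence $\lim_{r\to\infty}\mathscr{T}_s(\boldsymbol{\phi}^r) = \mathscr{T}_s(\boldsymbol{\mu})$ and \eqref{th-finite-completion}. This is the heart of the argument and the step I expect to be the main obstacle. The scheme is the usual one: take any subsequence along which $\tau_r$ converges; using the uniform bounds from the previous paragraph together with a diagonal extraction, extract a further subsequence $(r_k)$ along which $\phi^{r_k}_d \to \hat\phi_d$ for every fixed $d\in\N^{n+1}$. Each $\M_r(\boldsymbol{\phi}^{r})\succeq 0$ and $\M_{r-d_j}(g_j\boldsymbol{\phi}^r)\succeq 0$ passes to the limit, so by Putinar's theorem (applicable since $Q(g)$ is Archimedean) $\hat{\boldsymbol{\phi}}$ is the moment sequence of some $\hat\phi\in\mathscr{M}(\XX\times\YY)_+$. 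The constraints $\phi^r_d=\mu_d$ for $d\in D$ give $\hat\phi_d = \mu_d$ for all $d\in D$; in particular $\hat\phi_{d_x,0}=\lambda_{d_x}$ and $\hat\phi_{d_x,1}=\mu_{d_x,1}$ for all $d_x$, so Theorem \ref{th1} applies and yields $\hat\phi_d \geq \mu_d$ for all $d$, hence $\mathscr{T}_s(\hat{\boldsymbol{\phi}}) \geq \mathscr{T}_s(\boldsymbol{\mu})$. On the other hand, lower semicontinuity of the (finite, since $s$ is fixed) sum $\mathscr{T}_s$ under entrywise convergence — here actually continuity, as it involves finitely many entries — gives $\mathscr{T}_s(\hat{\boldsymbol{\phi}}) = \lim_k \mathscr{T}_s(\boldsymbol{\phi}^{r_k}) = \lim_k \tau_{r_k} \leq \mathscr{T}_s(\boldsymbol{\mu})$. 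Combining, $\mathscr{T}_s(\hat{\boldsymbol{\phi}}) = \mathscr{T}_s(\boldsymbol{\mu})$ and $\tau_{r_k}\to\mathscr{T}_s(\boldsymbol{\mu})$. Since $\hat\phi_d\geq\mu_d$ for all $d$ and the nonnegative quantities $\hat\phi_{2d}-\mu_{2d}$ sum (over $d\in\N^{n+1}_s$) to zero, each $\hat\phi_{2d}=\mu_{2d}$ for $d\in\N^{n+1}_s$; the off-diagonal equalities $\hat\phi_d=\mu_d$ for $d\in\N^{n+1}_{2s}$ then follow by a Cauchy--Schwarz argument inside the positive semidefinite matrix $\M_s(\hat{\boldsymbol{\phi}}-\boldsymbol{\mu})$... more carefully: one shows $\hat\phi_d=\mu_d$ for $|d|\leq 2s$ by noting that $\phi\mapsto\phi-\mu$ need not be a moment sequence, so instead I would directly invoke that $\hat\phi$ and $\mu$ agree on all $d\in D$ and have equal diagonal through degree $2s$, and use the difference of localizing structure; the cleanest route is: since the limit is independent of the subsequence (any subsequential limit $\hat\phi$ satisfies $\mathscr{T}_s(\hat{\boldsymbol{\phi}})=\mathscr{T}_s(\boldsymbol{\mu})$ and $\hat\phi_d\geq\mu_d$, forcing $\hat\phi_{2d}=\mu_{2d}$ for $|d|\leq s$), and the values $\phi^r_d$ for $|d|\leq 2s$ are controlled by these diagonal entries via the positive semidefinite moment matrix $\M_r(\boldsymbol{\phi}^r-\text{(truncation of }\boldsymbol{\mu}))$ — which is a difference of two PSD matrices, not necessarily PSD — so I would instead bound $|\phi^r_d - \mu_d|$ for off-diagonal $d$ using Cauchy--Schwarz applied to $\M_r(\boldsymbol{\phi}^r)$ and to $\M_r(\boldsymbol{\mu})$ separately together with $\phi^r_{2e}\to\mu_{2e}$. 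This gives \eqref{th-finite-completion}, and since every subsequence has a sub-subsequence with the same limit, the full limit exists.

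Finally, the last sentence: if $\nu\in\mathscr{M}(\XX\times\YY)_+$ has $\nu_d=\mu_d$ for all $d\in D$, then in particular $\nu_{d_x,0}=\lambda_{d_x}$ and $\nu_{d_x,1}=\mu_{d_x,1}$, so Theorem \ref{th1} applies directly to $\phi=\nu$ and gives $\nu_d\geq\mu_d$ for all $d$, whence $\mathscr{T}_s(\boldsymbol{\nu})=\sum_{d\in\N^{n+1}_s}\nu_{2d}\geq\sum_{d\in\N^{n+1}_s}\mu_{2d}=\mathscr{T}_s(\boldsymbol{\mu})$. The main obstacle is the middle step — specifically making the passage from equality of diagonal entries at the limit to convergence of all entries through degree $2s$ fully rigorous via Cauchy--Schwarz, and correctly handling the uniform boundedness needed for the diagonal extraction, which relies essentially on the Archimedean hypothesis on $Q(g)$.
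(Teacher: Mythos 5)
Your overall strategy coincides with the paper's: show the truncated feasible set is compact (so a minimizer exists and $\tau_r\le{\mathscr T}_s(\boldsymbol\mu)$ since the truncation of $\boldsymbol\mu$ is feasible), extract a pointwise-convergent subsequence of optimal solutions via weak-star compactness in $\ell_\infty$ after rescaling, invoke Putinar's Positivstellensatz on the Archimedean quadratic module to obtain a representing measure $\phi^*$ on $\XX\times\YY$ for the limit, apply Theorem~\ref{th1} to get $\phi^*_d\ge\mu_d$ termwise, and compare with the upper bound ${\mathscr T}_s(\boldsymbol{\phi}^*)\le{\mathscr T}_s(\boldsymbol\mu)$ to force ${\mathscr T}_s(\boldsymbol{\phi}^*)={\mathscr T}_s(\boldsymbol\mu)$. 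The last part of your proposal (the claim about $\nu$) is an immediate corollary of Theorem~\ref{th1} and is handled correctly.

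Where your argument breaks down is exactly where you flag your own discomfort: passing from the equality of the diagonal entries $\phi^*_{2d}=\mu_{2d}$ for $\vert d\vert\le s$ to the equality of \emph{all} entries $\phi^*_d=\mu_d$ for $\vert d\vert\le 2s$. You correctly observe that $\M_s(\boldsymbol\phi^*-\boldsymbol\mu)$ need not be positive semidefinite, but the substitute you propose — bounding $\vert\phi^r_d-\mu_d\vert$ by applying Cauchy--Schwarz to $\M_r(\boldsymbol{\phi}^r)$ and $\M_r(\boldsymbol\mu)$ \emph{separately} — does not close the gap: Cauchy--Schwarz on each matrix only constrains $\vert\phi^r_{d_1+d_2}\vert$ and $\vert\mu_{d_1+d_2}\vert$ individually through the corresponding diagonal entries; it says nothing about their difference, and two sequences can have identical diagonal entries at every degree while disagreeing off the diagonal. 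The clean way to finish (and the argument the paper is implicitly relying on) is the disintegration used in the proof of Theorem~\ref{th1}: writing $\phi^*$ with marginal $\lambda$ and conditional $\varphi^*(\cdot\vert\xx)$, that proof already shows $\int_\YY y\,\varphi^*(dy\vert\xx)=f(\xx)$ for $\lambda$-a.e.\ $\xx$. The equality of the single diagonal entry $\phi^*_{0,2}=\mu_{0,2}$ (which you have as soon as $s\ge1$, the case $s=0$ being vacuous) forces equality in Jensen's inequality for the strictly convex function $y\mapsto y^2$, i.e.\ the conditional $\varphi^*(\cdot\vert\xx)$ has zero variance and equals $\delta_{f(\xx)}$ for a.e.\ $\xx$. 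Hence $\phi^*=\mu$ as measures, and therefore $\phi^*_d=\mu_d$ for \emph{every} $d\in\N^{n+1}$, which gives \eqref{th-finite-completion} for any subsequential limit, and the usual subsequence argument upgrades this to the full limit. So the fix is a one-line ``equality-in-Jensen implies Dirac conditional'' deduction rather than any Cauchy--Schwarz estimate.
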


In other words, with a suitable choice of an objective function, 
the moment completion problem is solved asymptotically via the hierarchy of semidefinite relaxations \eqref{relax-primal}. That is,
arbitrary finitely many moments of a graph can be recovered
from  linear measurements, i.e. from the sole knowledge of zero-th and first order moments.

The proof of Theorem \ref{th-finite} is relegated to Section \ref{proofs}.

\subsection{Recovery of all moments}

Theorem \ref{th-finite} states that finitely many moments of the graph can be recovered. This result can be extended to recover (asymptotically) infinitely many moments.
Recall the weighted criterion:
\[{\mathscr T}(\boldsymbol{\phi})\,:=\,
\sum_{d\in\N^{n+1}}\theta_d\,\phi_d\]
already considered in \eqref{def-T}. When $\boldsymbol{\phi}=(\phi_d)_{d\in\N^{n+1}_{2r}}$ then ${\mathscr T}(\boldsymbol{\phi})$ is the obvious finite truncation
\[{\mathscr T}(\boldsymbol{\phi})\,=\,\sum_{d\in\N^{n+1}_{2r}}\theta_d\,\phi_d.\]
Next, consider the following hierarchy of semidefinite programs:
\begin{equation}
\label{relax-primal-new}
    \begin{array}{rl}
    \tau'_r=\displaystyle\inf_{\boldsymbol{\phi}} & {\mathscr T}(\boldsymbol{\phi})\::\\
    \mbox{s.t.} 
    &\M_r(\boldsymbol{\phi})\,\succeq0\,,\quad \M_{r-d_j}(g_j\,\boldsymbol{\phi})\succeq0,\quad j=1,\ldots,m+2,\\
    &\phi_d \, =\,\mu_d,\quad d \in D,
        \end{array}
\end{equation}
indexed by $r\geq d_0:=\max_j d_j$. Notice that  in contrast to \eqref{relax-primal}, the cost function
in \eqref{relax-primal-new} changes with the index $r$ as more and more terms 
of $\boldsymbol{\phi}$ are taken into account. 
Of course as $\boldsymbol{\mu}$ is a feasible solution, we also have $\tau'_d\leq {\mathscr T}(\boldsymbol{\mu})$ for every $r$.

\begin{theorem}
\label{th-infinite}
The semidefinite program \eqref{relax-primal-new} has an optimal solution $\boldsymbol{\phi}^r=(\phi^r_d)_{d\in\N^{n+1}_{2r}}$, and
$\tau'_r={\mathscr T}(\boldsymbol{\phi}^r)\leq{\mathscr T}(\boldsymbol{\mu})$ for all $r\geq d_0$. In addition, $\lim_{r\to\infty}  
{\mathscr T}(\boldsymbol{\phi}^r)={\mathscr T}(\boldsymbol{\mu})$ and
\begin{equation}
    \label{th-infinite-completion}
    \lim_{r\to\infty}\phi^r_d\,=\,\mu_d\quad\forall d\in\N^{n+1}.
\end{equation}
\end{theorem}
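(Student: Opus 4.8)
The plan is to mirror the structure of the proof of Theorem \ref{th-finite}, replacing the fixed trace criterion ${\mathscr T}_s$ by the weighted criterion ${\mathscr T}$, and then to extract a convergent subsequence whose limit is shown to be $\boldsymbol{\mu}$ by invoking Theorem \ref{th1} together with moment-determinacy. First I would argue existence of a minimizer at each level $r$: the feasible set of \eqref{relax-primal-new} is a closed subset of the compact set obtained by intersecting the positive-semidefiniteness constraints with the normalization $\phi_{0}=\lambda_{0}=1$ and $Y=[0,\gamma]$ (so that all pseudo-moments are bounded in terms of $\gamma$ and the $\lambda_{d_x}$ through the localizing constraint for $g_{m+1}=y(\gamma-y)$ and the Archimedean constraint $g_1$), hence the continuous linear functional ${\mathscr T}(\boldsymbol{\phi})=\sum_{d\in\N^{n+1}_{2r}}\theta_d\phi_d$ attains its infimum $\tau'_r$. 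Since $\boldsymbol{\mu}$ is feasible at every level, $\tau'_r\leq{\mathscr T}(\boldsymbol{\mu})<\infty$ by \eqref{def-T}.

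Next I would set up the subsequence-extraction argument. Fix any $d^\ast\in\N^{n+1}$. For all $r$ large the component $\phi^r_{d^\ast}$ lies in a bounded interval (again by the localizing constraints, which force $0\le\phi^r_{2d_x,d_y}$ and, combined with $g_{m+1}$, $\phi^r_{d_x,d_y}\le\gamma^{d_y}\lambda_{d_x}$ for the relevant indices). By a diagonal argument over the countable index set $\N^{n+1}$, there is a subsequence $(r_k)$ and a sequence $\boldsymbol{\phi}^\ast=(\phi^\ast_d)_{d\in\N^{n+1}}$ with $\phi^{r_k}_d\to\phi^\ast_d$ for every $d$. Passing to the limit in the finitely-many linear constraints $\M_r(\boldsymbol{\phi}^r)\succeq0$, $\M_{r-d_j}(g_j\boldsymbol{\phi}^r)\succeq0$ and $\phi^r_d=\mu_d$ ($d\in D$) shows that $\boldsymbol{\phi}^\ast$ satisfies all these constraints for every truncation order; since $Q(g)$ is Archimedean, Putinar's theorem gives a representing measure $\phi\in\mathscr{M}(\XX\times Y)_+$ with moments $\boldsymbol{\phi}^\ast$, and $\phi_{d_x,0}=\lambda_{d_x}$, $\phi_{d_x,1}=\mu_{d_x,1}$ for all $d_x\in\N^n$. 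Theorem \ref{th1} then yields $\phi^\ast_d=\phi_d\ge\mu_d$ for all $d$.

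Now I would close the argument on the criterion. By Fatou's lemma (the terms $\theta_d\phi^{r_k}_d\ge0$) and monotonicity of the truncations,
\[
{\mathscr T}(\boldsymbol{\phi}^\ast)=\sum_{d\in\N^{n+1}}\theta_d\,\phi^\ast_d\,\le\,\liminf_{k\to\infty}\sum_{d\in\N^{n+1}_{2r_k}}\theta_d\,\phi^{r_k}_d\,=\,\liminf_{k\to\infty}\tau'_{r_k}\,\le\,{\mathscr T}(\boldsymbol{\mu}).
\]
On the other hand, $\phi^\ast_d\ge\mu_d$ for all $d$ and $\theta_d>0$ force ${\mathscr T}(\boldsymbol{\phi}^\ast)\ge{\mathscr T}(\boldsymbol{\mu})$, and moreover any strict inequality $\phi^\ast_{d_0}>\mu_{d_0}$ at a single index would already make the sum strictly larger (since $\sum_d\theta_d\mu_d$ is finite), so we must have $\phi^\ast_d=\mu_d$ for every $d\in\N^{n+1}$. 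This also shows $\lim_k\tau'_{r_k}={\mathscr T}(\boldsymbol{\mu})$, and because the bound $\tau'_r\le{\mathscr T}(\boldsymbol{\mu})$ holds for the whole sequence while every convergent subsequence has limit ${\mathscr T}(\boldsymbol{\mu})$, in fact $\lim_{r\to\infty}\tau'_r={\mathscr T}(\boldsymbol{\mu})$. The same subsequence-uniqueness reasoning, applied componentwise, gives $\lim_{r\to\infty}\phi^r_d=\mu_d$ for all $d\in\N^{n+1}$, which is \eqref{th-infinite-completion}.

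The step I expect to be delicate is the exchange of limit and infinite sum when identifying $\lim_k\tau'_{r_k}$ with ${\mathscr T}(\boldsymbol{\phi}^\ast)$: one only gets $\le$ from Fatou for free, and the reverse inequality has to be extracted from the constraint-driven lower bound $\phi^\ast\ge\mu$ rather than from any uniform integrability of the $\boldsymbol{\phi}^{r_k}$ (their tails are not controlled uniformly in $k$). The clean way around this is exactly the two-sided squeeze above — Fatou gives ${\mathscr T}(\boldsymbol{\phi}^\ast)\le{\mathscr T}(\boldsymbol{\mu})$, Theorem \ref{th1} gives ${\mathscr T}(\boldsymbol{\phi}^\ast)\ge{\mathscr T}(\boldsymbol{\mu})$, and the strict positivity of every $\theta_d$ upgrades the resulting equality of sums to equality of every term. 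A second point that needs a line of care is the uniform boundedness of the pseudo-moments: this is where one really uses that $Y=[0,\gamma]$ is bounded and that the redundant ball constraint $g_1$ has been added, so that $\M_r(\boldsymbol{\phi})\succeq0$ plus the localizing constraints pin down each coordinate in a compact interval independent of $r$.
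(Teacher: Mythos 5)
Your proof is correct and takes essentially the same route as the paper: uniform bounds on each pseudo-moment from the localizing constraints (in particular $g_{m+1}=y(\gamma-y)$ and the ball constraint), subsequence extraction, Putinar's theorem to produce a representing measure for the limit sequence $\boldsymbol{\phi}^*$, Theorem~\ref{th1} for $\phi^*_d\geq\mu_d$, and strict positivity of the $\theta_d$ to upgrade equality of the weighted sums to $\phi^*_d=\mu_d$ for every $d$, followed by the subsequence-uniqueness step. The only deviation is at the last limit: the paper constructs an explicit dominating sequence $\psi_d=\gamma^{2\lceil d_y/2\rceil}$ with $\mathscr{T}(\boldsymbol{\psi})<\infty$ and applies Dominated Convergence to conclude $\tau'_{r_j}\to\mathscr{T}(\boldsymbol{\phi}^*)$, whereas you invoke Fatou to get $\mathscr{T}(\boldsymbol{\phi}^*)\le\liminf_j\tau'_{r_j}$ and then close the squeeze with Theorem~\ref{th1} — a marginally more economical variant that needs only nonnegativity and reaches the same conclusion.
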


The proof of Theorem \ref{th-infinite} is relegated to Section \ref{proofs}.

\section{Application to the L-moment problem 
with unknown support}\label{lmoment}

In the  generalized L-moment problem with $\K\subset\R^n$ known, the goal is to 
retrieve a 
measure $\phi$ on $\K$ with density $0\leq h\leq L$ in $\mathscr{L}^\infty(\K)_+$, whose moments $\boldsymbol{\phi}=(\phi_{d_x})_{d_x\in D}=(\int_\K \xx^{d_x} d\phi)_{d_x\in D}$, are given, and where $D\subset\N^n$ is also given.  The classical univariate L-moment problem on $\R$ was solved in \cite{L-moment}.

So when $\K$ is known and moments of Lebesgue measure on $\K$ are known as well, 
then we have seen how to apply the moment-completion approach to recover the unknown density
in $h\in\mathscr{L}(\K)_+$; see Section \ref{sec:main} with $\XX=\K$.

When $\K$ is unknown this is just the $\K$-moment problem with the additional
information that $\phi$ has a bounded density with respect to Lebesgue measure on 
the unknown $\K$. We next show how to formulate this problem as a completion problem investigated in previous sections.

Assuming $\K$ is compact, let $\XX$ be a box such that $\K\subset\XX$. Then
\begin{equation}
    \label{L-moment}
    \mu_{d_x} \,=\,\int_\K \xx^{d_x}\,d\phi\,=\,\int_\XX\xx^{d_x} \,\underbrace{1_\K(\xx)\,h(\xx)}_{f(\xx)}\,d\xx,\quad d_x\in D.
\end{equation}
Formulation \eqref{L-moment} for the  full L-moment problem (i.e., where $D=\N^n$) is exactly the framework developed in previous sections where 
$\xx\mapsto f(\xx):=1_\K(\xx)\,h(\xx)\in \mathscr{L}^\infty(\XX)_+$,
is the function whose graph is to be recovered. Indeed, letting
$d\mu(\xx,y)=\delta_{1_\K(\xx)\,h(\xx)}(dy)\,d\xx$,
\begin{eqnarray}
\nonumber
\mu_{d_x,0}&=&\int_\XX\xx^{d_x}\,d\xx\,,\quad d_x\in\N^n\quad \mbox{known in closed form}\\
\label{order-1}
\mu_{d_x,1}&=&\int_{\XX\times Y} \xx^{d_x}\,y\,d\mu\,:=\,\int_\XX \xx^{d_x}\,f(\xx)\,d\xx\,=\,\int_\K\xx^{d_x}\,h(\xx)\,d\xx\,=\,\phi_{d_x}\,,\:{d_x}\in\N^n.
\end{eqnarray}
So applying the moment completion approach developed earlier, one may 
approximate as closely as desired any finite number of moments of $\mu$,
and in a second-step to approximate the function $\xx\mapsto 1_\K(\xx)\,h(\xx)$.

It follows that the moment completion approach allows to both approximate 
the unknown support and density of the measure $\phi$ on $\K$.

\subsection*{The particular case when $h=1$}

When the density $h\in\mathscr{L}(\K)_+$ is the indicator
$1_\K$ then the above procedure described in Section \ref{sec:main} allows to 
approximate the indicator function $1_\K$ from knowledge of moments of the Lebesgue measure on $\K$ (e.g. obtained from measurements).
Notice  that in this case \eqref{order-1} yields:
\[\mu_{d_x,d_y}\,=\,\int_\XX \xx^{d_x}\,y^{d_y} d\mu(\xx,y)\,=\,
\int_\XX \xx^{d_x}1_\K(\xx)^{d_y}\,d\xx\,=\,\mu_{d_x,1}\,,\quad\forall (d_x,d_y)\in\N^{n+1}.\]
Therefore the matrix completion problem
is trivial. There is no optimization involved. Only the second step 
of recovery via the Christoffel-Darboux kernel is needed. 
In particular the moment matrix $\M_r(\mu)$ is highly singular since
all columns indexed by $(d_x,d_y)$ with $d_y>1$ are identical to the column indexed by $(d_x,1)$.
So we may replace $\M_r(\mu)$ with the smaller size matrix $\widehat{\M}_r(\mu)$ with rows and 
columns indexed by monomials $(\xx^{d_x})_{d_x\in\N^n_r}$ and $(\xx^{d_x}\,y)_{d_x\in\N^n_{r-1}}$.

\section{Graph approximation with the Christoffel-Darboux kernel}\label{cd-sec}

Let us briefly recall one of the main results of \cite{momgraph} where the Christoffel-Darboux kernel is used to approximate the support of a measure concentrated on a graph given its moments. 

Let $\mathbf{b}(\xx,y)$ be a polynomial vector of degree at most $d$ whose elements are orthonormal with respect to the bilinear form induced by a measure which is absolutely continuous with respect to the uniform measure on $\XX\times Y$. Let
\[
\M_r:=\int_{X} \mathbf{b}(\xx,f(\xx))\mathbf{b}(\xx,f(\xx))^\top d\xx = \int_{\XX\times Y} \mathbf{b}(\xx,y)\mathbf{b}(\xx,y)^\top d\mu(\xx,y)
\]
be the moment matrix of order $d$ of the measure $d\mu(\xx,y):=\mathbb{I}_X(\xx)\,d\xx\,\delta_{f(\xx)}(dy)$ concentrated on the graph
$\{(\xx,f(\xx)):\xx\in X\} \subset X\times Y$. Given $\M_r$, we want to compute an approximation $f_r$ of function $f$, with convergence guarantees when $r\to\infty$.

Given a regularization parameter $\beta_r := 2^{3 - \sqrt{r}}$, define the Christoffel-Darboux (CD) polynomial
\[
 q_r(\xx,y) := \mathbf{b}(\xx,y)^\top (\M_r + \beta_r \mathbf{I})^{-1} \mathbf{b}(\xx,y).
\]
Polynomial $q_r$ can be computed numerically by a spectral decomposition of the positive semidefinite matrix $\M_r$. Instead of trying to approximate the (possibly discontinuous) function $f$ with polynomials, we approximate it with a class of semi-algebraic functions. We define the semi-algebraic approximant
\[
f_r(\xx) := \min \{ \displaystyle\mathrm{argmin}_{y\in Y} \:\: q_r(\xx,y) \}
\]
as the minimum of the argument of the minimum of the CD polynomial, always well defined since this polynomial is a sum of squares (SOS).

\begin{theorem}\cite[Theorem 1]{momgraph}\label{l1approx}
If the set $S \subset X$ of continuity points of $f$ is such that $X \setminus S$ has Lebesgue measure zero, then $\lim_{r\to\infty}f_r(\xx)=f(\xx)$ for almost all $\xx \in X$ and $\lim_{r \to\infty}\Vert f-f_r\Vert_{{\mathscr L}^1(X)} = 0$.
\end{theorem}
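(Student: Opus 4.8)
The plan is to reduce the $\mathscr{L}^1$ convergence to almost‑everywhere pointwise convergence and to establish the latter at continuity points of $f$. Since $0\le f\le\gamma$, $0\le f_r\le\gamma$ (because $f_r(\xx)\in Y=[0,\gamma]$), and $\XX$ has finite Lebesgue measure, once $f_r(\xx)\to f(\xx)$ for a.e.\ $\xx$ the bound $\Vert f-f_r\Vert_{{\mathscr L}^1(\XX)}\to0$ follows from dominated convergence. By hypothesis almost every $\xx$ is a continuity point, so it suffices to fix such an $\xx$ and a $\delta\in(0,\gamma)$ and to show that, for $r$ large, every minimizer of the polynomial $y\mapsto q_r(\xx,y)$ over $Y$ lies in $(f(\xx)-\delta,f(\xx)+\delta)$; then $f_r(\xx)=\min\{\mathrm{argmin}_{y\in Y}q_r(\xx,y)\}$ satisfies $|f_r(\xx)-f(\xx)|\le\delta$, and letting $\delta\downarrow0$ finishes. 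The whole argument is a sandwich: a cheap upper bound for $q_r(\xx,f(\xx))$, and a lower bound for $q_r(\xx,y)$ that blows up uniformly over $\{y\in Y:\,|y-f(\xx)|\ge\delta\}$.

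For the upper bound I would integrate $q_r(\cdot,f(\cdot))$ over the graph. Since $\M_r=\int_\XX\mathbf{b}(\xx,f(\xx))\mathbf{b}(\xx,f(\xx))^\top d\xx$,
\[
\int_\XX q_r(\xx,f(\xx))\,d\xx \;=\; {\rm trace}\big((\M_r+\beta_r\mathbf{I})^{-1}\M_r\big)\;\le\;N_r,
\]
where $N_r:=\binom{n+1+r}{n+1}$ is the number of entries of $\mathbf{b}$, because the eigenvalues of $(\M_r+\beta_r\mathbf{I})^{-1}\M_r$ all lie in $[0,1)$. By Markov's inequality $\big|\{\xx\in\XX:\,q_r(\xx,f(\xx))>r^{n+3}\}\big|=O(r^{-2})$, which is summable, so by the Borel--Cantelli lemma for a.e.\ $\xx\in\XX$ one has $q_r(\xx,f(\xx))\le r^{n+3}$, hence $\min_{y\in Y}q_r(\xx,y)\le r^{n+3}$, for all large $r$.

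The core is the lower bound, and this is where I expect the real work to be. Writing $p=\mathbf{w}^\top\mathbf{b}$ and applying the Cauchy--Schwarz/variational identity for $\mathbf{b}(\xx,y)^\top(\M_r+\beta_r\mathbf{I})^{-1}\mathbf{b}(\xx,y)$ gives
\[
q_r(\xx,y)\;\ge\;\frac{1}{\int_\XX p(\xx',f(\xx'))^2\,d\xx'\,+\,\beta_r\Vert p\Vert_{\mathbf{b}}^2}\qquad\text{for every }p\text{ with }\deg p\le r,\ p(\xx,y)=1,
\]
where $\Vert p\Vert_{\mathbf{b}}^2$ is the squared coefficient norm of $p$ in the orthonormal basis $\mathbf{b}$, so $\Vert p\Vert_{\mathbf{b}}^2=\int p^2\,d\omega\le C_\omega\Vert p\Vert_\infty^2$ by Parseval ($\omega$ the orthonormality measure, $C_\omega:=\omega(\XX\times Y)<\infty$). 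It then suffices to exhibit one good test polynomial. Using continuity of $f$ at $\xx$, fix a ball $B\ni\xx$ with $|f(\xx')-f(\xx)|<\delta/2$ on $B\cap\XX$, set $R:=\gamma/\delta>1$, and take $p:=A\cdot P$ where $P(y'):=\big((y'-f(\xx))/(y-f(\xx))\big)^{k}$ and $A$ is a polynomial bump of degree $O(k)$ with $A(\xx)=1$, $0\le A\le1$ on $\XX$, and $|A|\le R^{-2k}$ off $B$ (e.g.\ a power of $1-\Vert\xx'-\xx\Vert^2/{\rm diam}(\XX)^2$). Then $p(\xx,y)=1$; moreover $|P|\le 2^{-k}$ whenever $|y'-f(\xx)|<\delta/2$ and $|P|\le R^{k}$ on all of $Y$, so $\Vert p\Vert_\infty\le R^{k}$ and, splitting the integral over $B$ and its complement, $\int_\XX p(\xx',f(\xx'))^2\,d\xx'\le|B|\,2^{-2k}+|\XX|\,R^{-2k}$. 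Choosing $k=k_r:=\lfloor\sqrt r/(4\log_2 R)\rfloor$ keeps $\deg p=O(\sqrt r)\le r$ for $r$ large and drives both $\beta_r\Vert p\Vert_{\mathbf{b}}^2\le C_\omega\,2^{3-\sqrt r}R^{2k_r}$ and $\int_\XX p(\xx',f(\xx'))^2\,d\xx'$ below $2^{-c\sqrt r}$ for some $c>0$; hence $q_r(\xx,y)\ge 2^{c\sqrt r}$ for all large $r$, uniformly over $\{y\in Y:\,|y-f(\xx)|\ge\delta\}$.

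Combining the two estimates finishes it: for a.e.\ $\xx$ (a continuity point also satisfying the Borel--Cantelli bound) and $r$ large, $q_r(\xx,y)\ge 2^{c\sqrt r}>r^{n+3}\ge q_r(\xx,f(\xx))\ge\min_{y\in Y}q_r(\xx,y)$ for all $y\in Y$ with $|y-f(\xx)|\ge\delta$, so every minimizer of $q_r(\xx,\cdot)$ — in particular $f_r(\xx)$ — lies in $(f(\xx)-\delta,f(\xx)+\delta)$; letting $\delta\downarrow0$, and then invoking dominated convergence, yields both claims. I expect the sole real obstacle to be the off‑graph lower bound, precisely the trade‑off between its two error terms: the test polynomial must be tiny on the \emph{whole} graph (which forces a sharp, high‑coefficient‑norm ``needle'') yet have coefficient norm that stays small after multiplication by $\beta_r$; splitting the degree budget between the $\xx$‑localizing bump and the $y$‑power, with $\beta_r$ decaying subexponentially like $2^{-\sqrt r}$ rather than polynomially, is exactly what reconciles these demands. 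A secondary technicality is the passage from the averaged on‑graph estimate to an almost‑everywhere pointwise one, handled above by the summability of $O(r^{-2})$.
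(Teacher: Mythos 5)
This theorem is not proved in the present paper: it is stated verbatim as a citation of \cite[Theorem 1]{momgraph}, and Section \ref{cd-sec} serves only to recall that result, so there is no in-paper proof to compare against. That said, your blind reconstruction reads as a sound proof on its own terms, and it follows the route one would expect for Christoffel--Darboux results of this type: the averaged on-graph bound $\int_\XX q_r(\xx,f(\xx))\,d\xx = {\rm trace}\bigl((\M_r+\beta_r\mathbf{I})^{-1}\M_r\bigr)\le N_r$ upgraded to almost-everywhere polynomial growth by Borel--Cantelli, against the variational off-graph lower bound $q_r(\xx,y)\ge \bigl(\int_\XX p(\cdot,f(\cdot))^2\,d\xx+\beta_r\Vert p\Vert_\mathbf{b}^2\bigr)^{-1}$ with a test polynomial of the form (spatial bump) $\times$ ($y$-power needle) and the degree budget split so that both error terms decay like $2^{-c\sqrt r}$ --- which is precisely the role of the subexponential regularization $\beta_r=2^{3-\sqrt r}$. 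The one point worth keeping explicit is that the constants in your off-graph step depend on $\xx$ (through the continuity radius of $B$) and on $\delta$ (through $R=\gamma/\delta$ and hence the degree of the bump), so the threshold there is $r\ge r_1(\xx,\delta)$, while the Borel--Cantelli threshold is $r\ge r_0(\xx)$; the argument closes because you fix $\xx$ and $\delta$ before sending $r\to\infty$ and only then let $\delta\downarrow0$. With that reading I see no gap in the proposal, only the structural caveat above that the statement is external to this paper.
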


This graph approximation method is implemented in the {\tt momgraph} function described in \cite{momgraph}.

\section{Examples}

For our numerical examples we modeled our moment problems with GloptiPoly \cite{gloptipoly} and solved them with MOSEK \cite{mosek} on Matlab. For reconstructing graphs from approximate moments we used he {\tt momgraph} function described in \cite{momgraph}.

\subsection{Convex optimal transport}

Let us first illustrate the CD kernel graph approximation method of Theorem \ref{l1approx} with a standard quadratic optimal transport problem, a particular case of optimization problem \eqref{opt}. Let $\xx=x\in \XX:=[-1,1]$ and $y \in \YY:=[-1,1]$ be scalar, and $c(x,y)=(x-y)^2/2$. Only marginal moments are available: $D = \{(d_x,0) : d_x \in \N\} \cup \{(0,d_y) : d_y \in \N\}$ and we want to transport the density $d\lambda(x)=(1-x)dx/2$ on $\XX$ onto the density $(1+y)dy/2$ on $\YY$. 

In this case there exists a unique transport map
\[
f(x)=-1+\sqrt{(1+x)(3-x)}
\]
yielding the optimal cost
\[
\int_{\XX} c(x,f(x))d\lambda(x) = \frac{\pi}{2}-\frac{4}{3} \approx 0.23746.
\]
\begin{table}[ht]
\centering
\begin{tabular}{c|ccccccc}
relaxation order $r$ & 1 & 2 & 3 & 4 & 5 & 6 & 7 \\\hline
lower bound & 0.22222 & 0.23494 & 0.23725 &  0.23742 & 0.23743 & 0.23745 & 0.23746
\end{tabular}
\caption{Lower bounds on optimal cost at various relaxation orders $r$.\label{copt-tab}}
\end{table}

\begin{figure}[ht]
\centering
\includegraphics[width=0.45\textwidth]{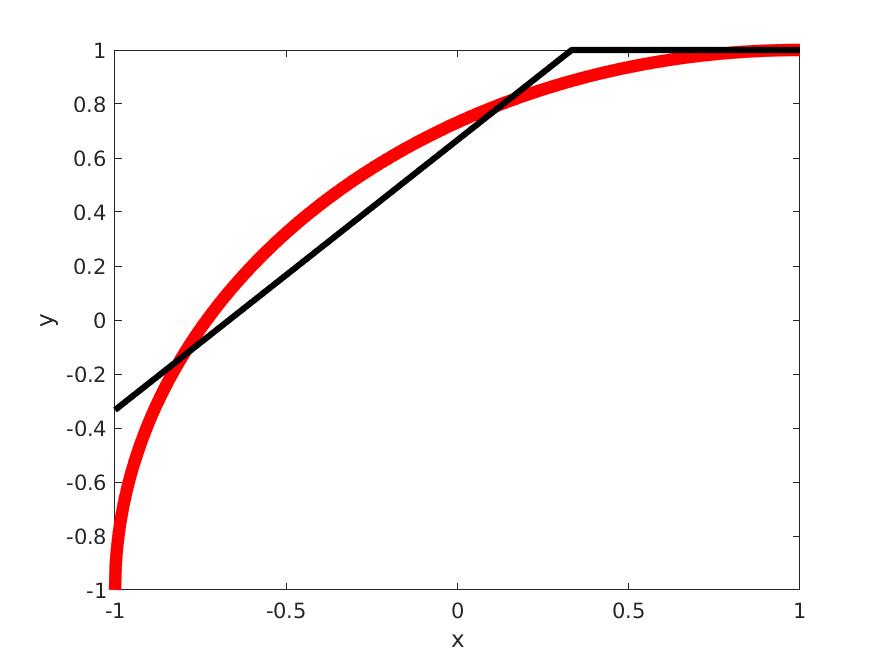}
\includegraphics[width=0.45\textwidth]{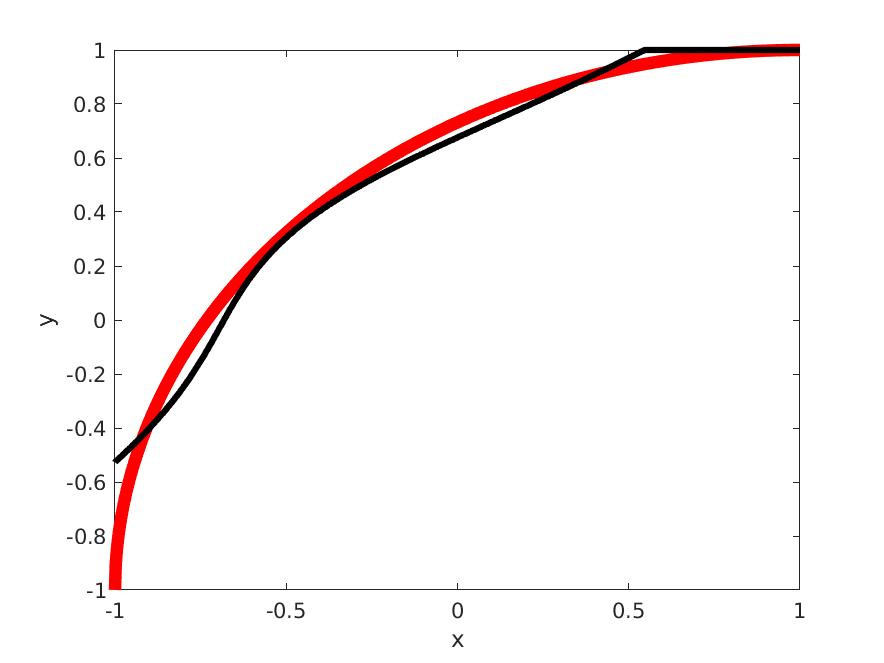}
\includegraphics[width=0.45\textwidth]{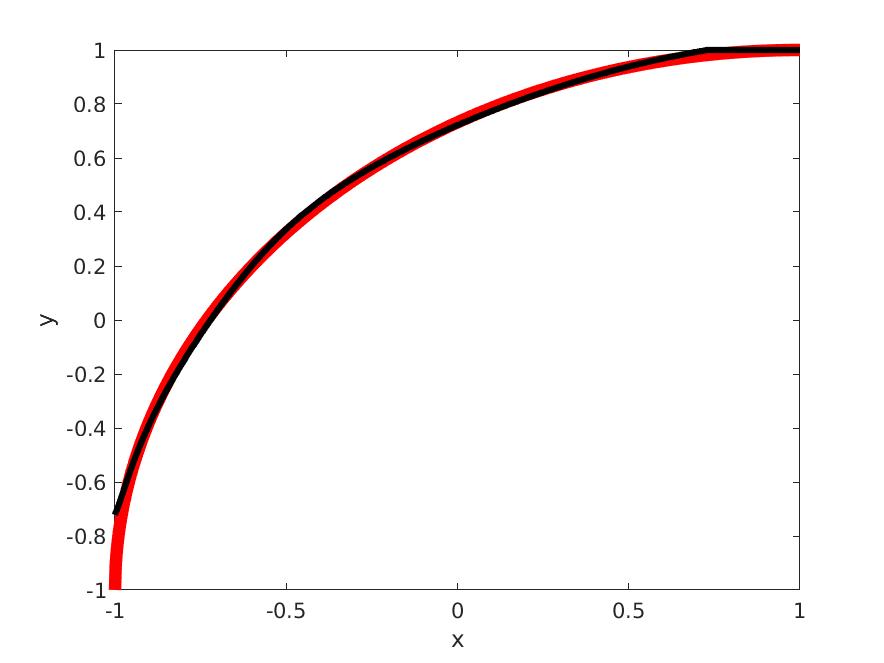}
\includegraphics[width=0.45\textwidth]{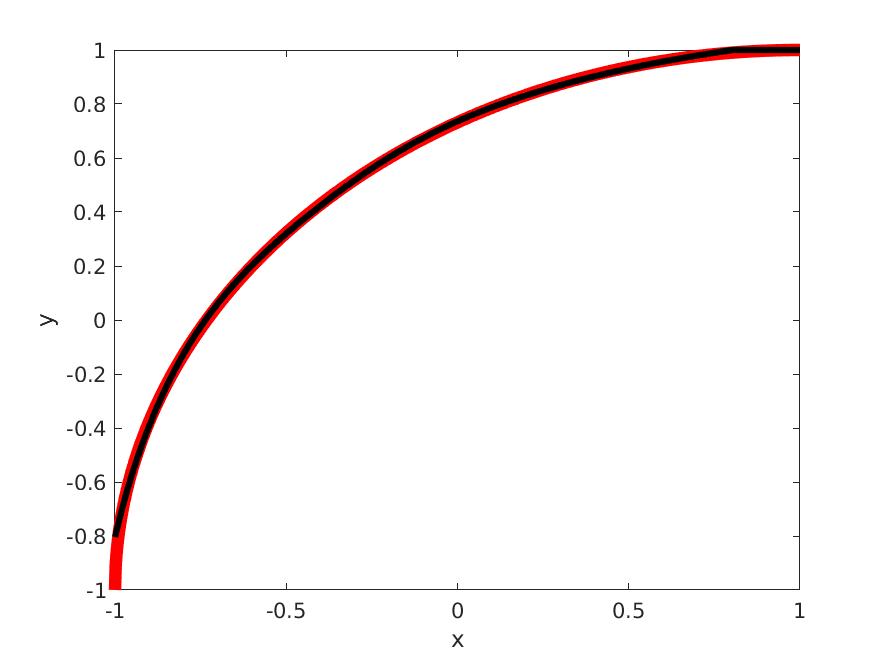}
\caption{CD approximations (thin black) of optimal transport map
(thick gray) for relaxation orders $r=1$ (top left), $r=2$ (top right), $r=3$ (bottom left), $r=4$ (bottom right).\label{copt-fig}}
\end{figure}

In Table \ref{copt-tab} we report the lower bounds of the moment 
relaxations for various relaxation orders $r$. In Figure \ref{copt-fig} we represent the approximate graphs obtained from the CD kernel method described in Section \ref{cd-sec}.

\subsection{Non-convex optimal transport}

Let us now illustrate the CD kernel graph approximation method of Theorem \ref{l1approx} with a non-convex optimal transport problem. Let $\xx=x \in \XX:=[0,1]$ and $y \in \YY:=[0,1]$ be scalar, and $c(x,y):=(4x-y)xy$. Only marginal moments are available: $D = \{(d_x,0) : d_x \in \N\} \cup \{(0,d_y) : d_y \in \N\}$ and we want to transport the Lebesgue measure on $X$ onto the Lebesgue measure on $Y$.  An analytic optimal transport map is known \cite[Section 5.3]{anderson87} for this example:
\[
f(x) = \left\{
\begin{array}{ll}
\frac{2}{3}+\frac{4}{3}x & \text{on $[0,\frac{1}{4}]$} \\
2-4x & \text{on $[\frac{1}{4},\frac{1}{3}]$} \\
1-x & \text{on $[\frac{1}{3},1]$} \\
\end{array}\right.
\]
yielding the optimal cost
\[
\int_X c(x,f(x))dx = \frac{107}{432} \approx 0.24769.
\]
Formulating problem \eqref{opt} as a generalized problem of moments \eqref{relax} with $\lambda(dx)=dx$, we can use the moment-SOS hierarchy to approximate numerically all the moments of the measure $dx\delta_{f(x)}(dy)$. 

 \begin{figure}
        \centering
        \includegraphics[width=\textwidth]{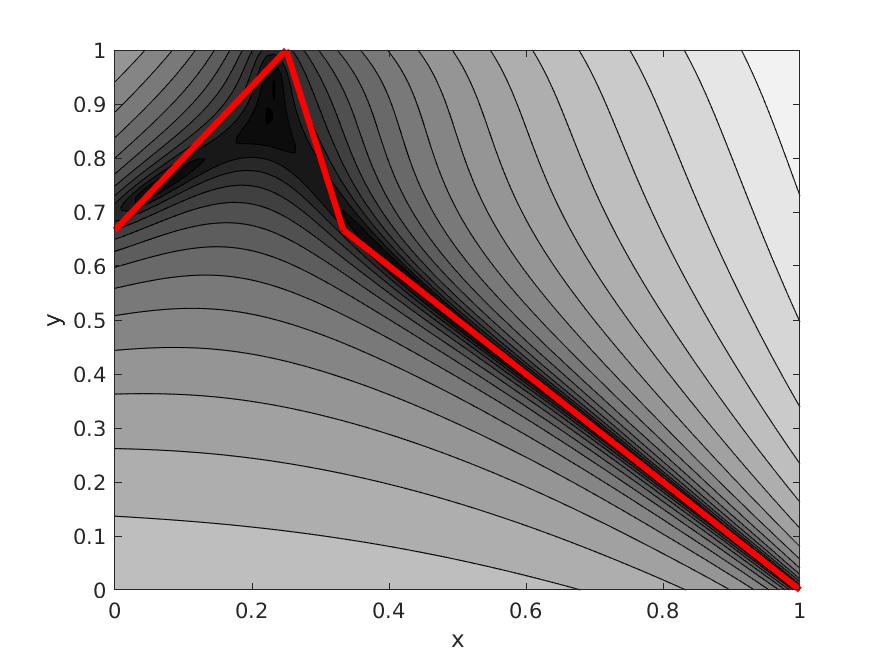}
        \caption{Logarithmic level sets (gray, smaller values darker) of the CD polynomial of degree $12$ approximating the graph (thick red) of the optimal transport map.}
        \label{fig-transport}
    \end{figure}
    
We solved the moment relaxation of order $r=6$, corresponding to approximate moments of degree up to $2r=12$ and a positive semidefinite moment matrix of size $28$. The lower bound on the optimal cost obtained with the moment relaxation is $0.24741$, matching 3 digits of the exact cost. The maximum absolute error between the approximate moments and the exact moments is $9.6\cdot 10^{-4}$. On Figure \ref{fig-transport} we represent sublevel sets of the CD polynomial $q_6$ of degree $12$. We observe that they closely match with the graph of the transport map.

\subsection{Graph reconstruction from linear measurements}

 \begin{figure}
        \centering
        \includegraphics[width=.45\textwidth]{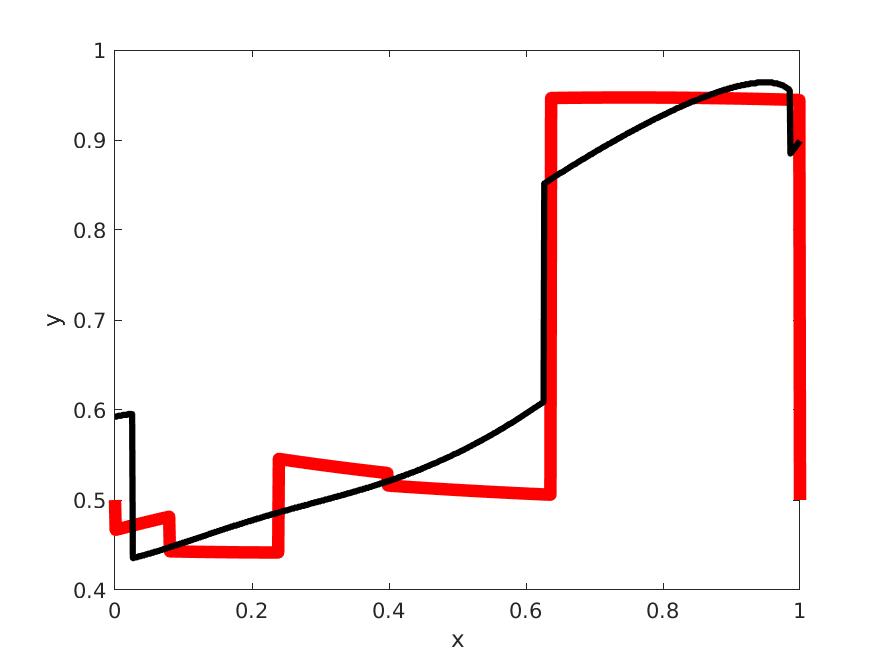}\includegraphics[width=.45\textwidth]{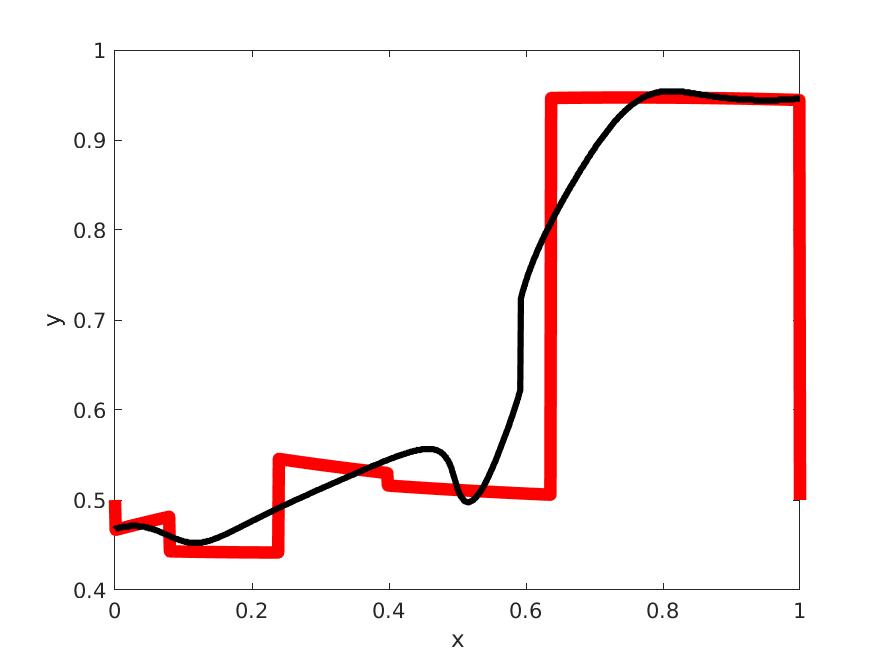}
        \includegraphics[width=.45\textwidth]{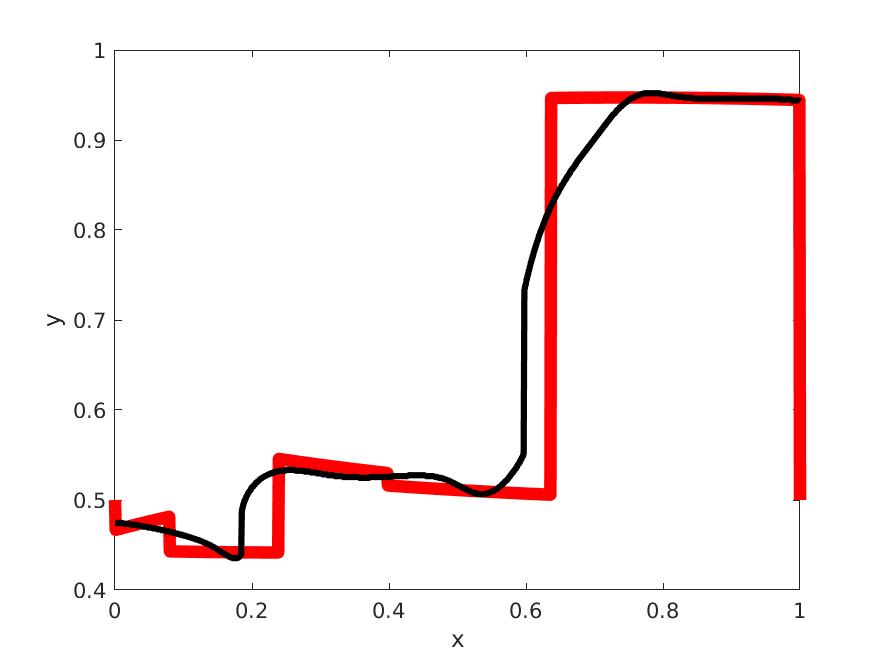}\includegraphics[width=.45\textwidth]{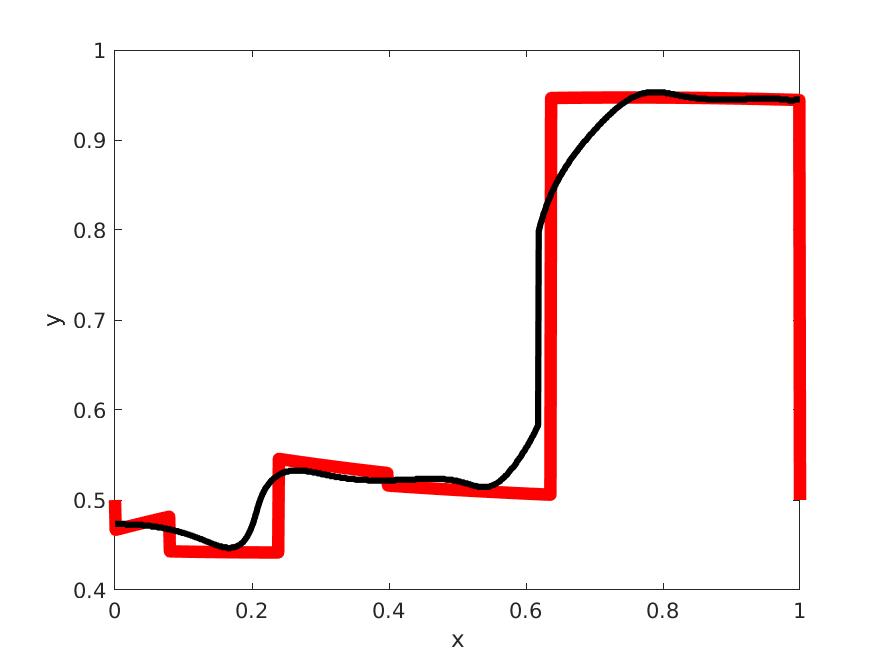}
        \caption{Discontinuous benchmark function (red) and its approximations (black) obtained from approximate moments recovered at relaxation order $r=4$ (top left), $r=8$ (top right), $r=12$ (bottom left), $r=16$ (bottom right).}
        \label{fig-eckhoff3}
    \end{figure}

Example 67 of \cite{eckhoff93} was used in \cite[Section 5.4]{momgraph} to illustrate the CD kernel graph approximation algorithm when all the moments are available. Let us revisit the same example to illustrate the reconstruction algorithm of Theorem 
\ref{th-infinite}, when  $\boldsymbol{\theta}$ is the truncated sequence of all ones.

Approximate graphs for various relaxation orders are reported in Figure \ref{fig-eckhoff3}.

\subsection{Comparison with $L_2$ estimate}

\begin{figure}
	\centering
	\includegraphics[width=0.45\textwidth]{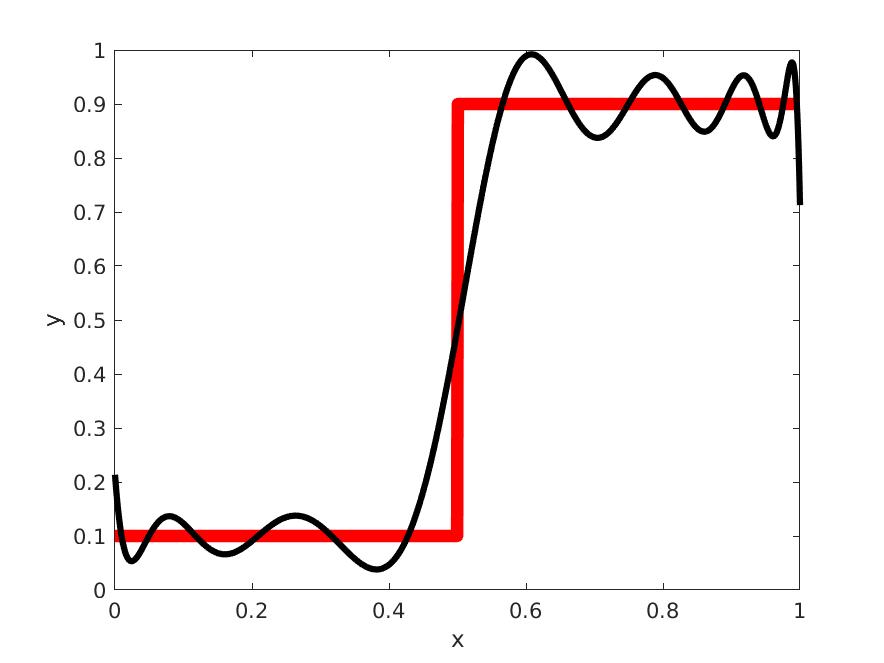}
	\includegraphics[width=0.45\textwidth]{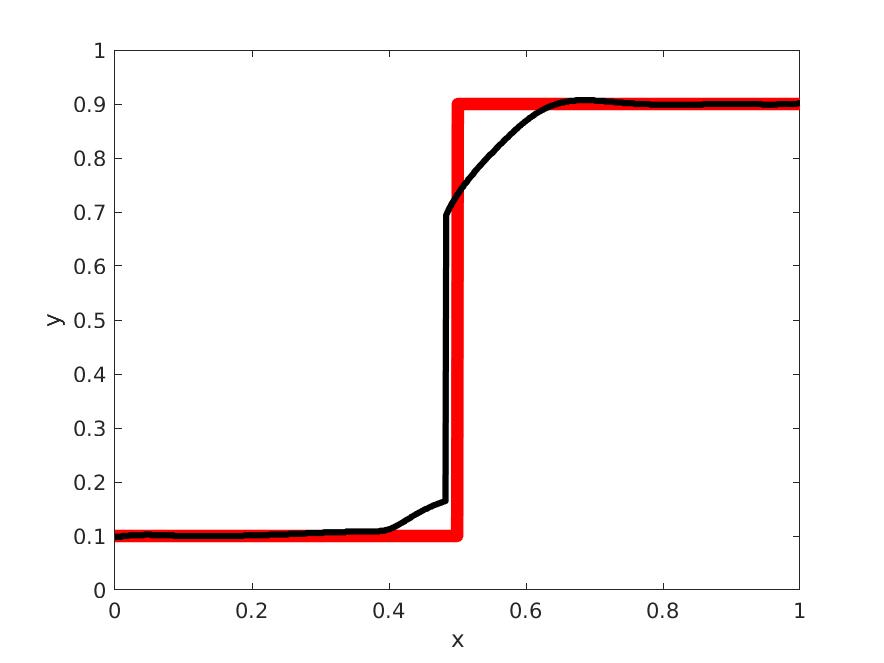}
	\caption{Approximations of the graph (thick red) of a step function.
	Left: $L_2$ estimate of degree $12$ (black).
	Right: CD polynomial of degree $12$ obtained from first order moments (black). 
	\label{fig-step}}
\end{figure}

\begin{figure}
	\centering
	\includegraphics[width=0.45\textwidth]{step12L2.jpg}
	\includegraphics[width=0.45\textwidth]{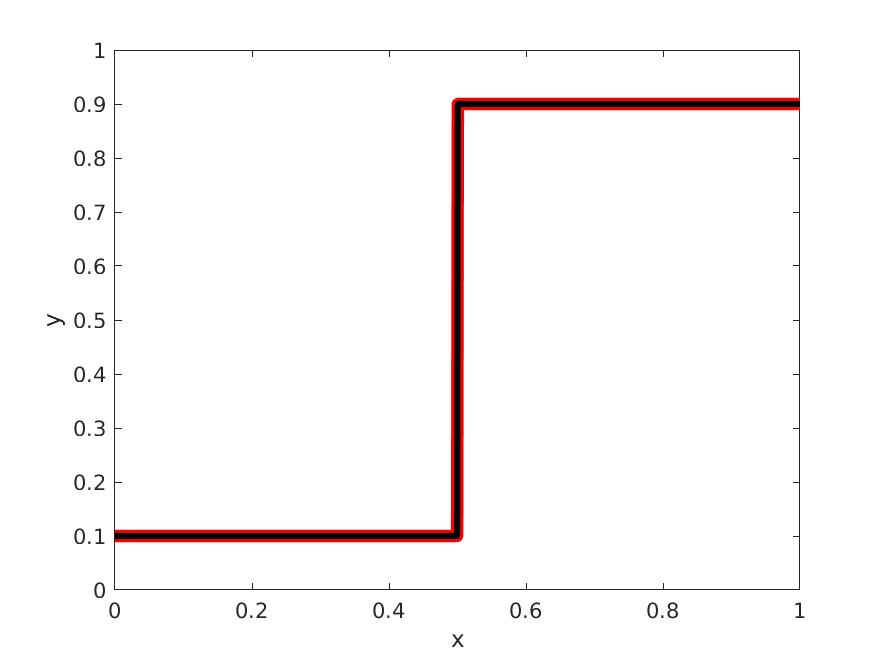}
	\caption{Approximations of the graph (thick red) of a step function.
	Left: $L_2$ estimate of degree $12$ (black).
	Right: CD polynomial obtained from all moments of degree up to $4$ (black). 
	\label{fig-stepcd}}
\end{figure}

Consider the problem of reconstructing a step function
\[
f(x) = \left\{\begin{array}{rcl}
0.1 & \mathrm{if} & 0\leq x<0.5 \\
0.9 & \mathrm{if} & 0.5\leq x\leq 1.
\end{array}\right.
\]
The only input data are the moments $\int_0^1 x^{d_x} dx$ and $\int_0^1 x^{d_x} f(x) dx$ for $d_x$ less than a given degree $r \in \N$.

The reconstruction algorithm of Theorem 
\ref{th-infinite}, when  $\boldsymbol{\theta}$ is the truncated sequence of all ones, and $r=12$ gives the approximate graph of the right of Figure \ref{fig-step}. It is not perfect, especially at the discontinuity point, yet it compares favorably with the $L_2$ density estimate of degree $12$ of \cite[Proposition 1]{hlm14} which oscillates around the graph to be recovered, as seen at the left of Figure \ref{fig-step}.

For comparison, we report on the right of Figure \ref{fig-stepcd} the graph obtained with the CD polynomial constructed from moments up to degree $4$, as already studied in \cite[Example 3, Figure 2]{momgraph}. We observe that in this case the graph approximation is almost perfect, thanks to the additional information provided by the higher degree moments.

\subsection{Disconnected L-moment problem}

 \begin{figure}
        \centering
        \includegraphics[width=.8\textwidth]{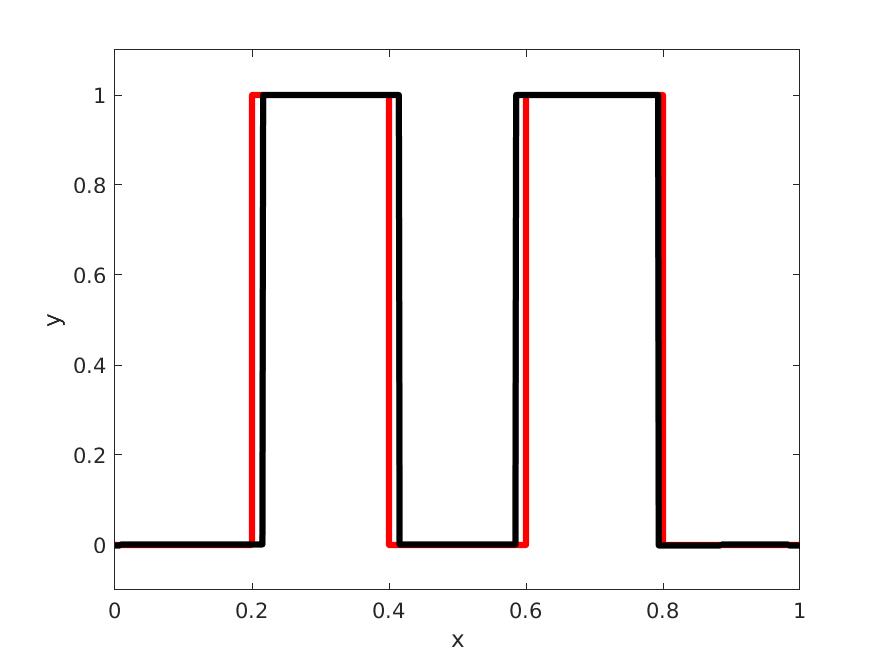}
        \caption{Indicator function (red) and its approximation (black) obtained from approximate moments recovered at relaxation order $r=10$.}
        \label{fig-lmomentgr}
    \end{figure}

\begin{figure}
        \centering
        \includegraphics[width=.45\textwidth]{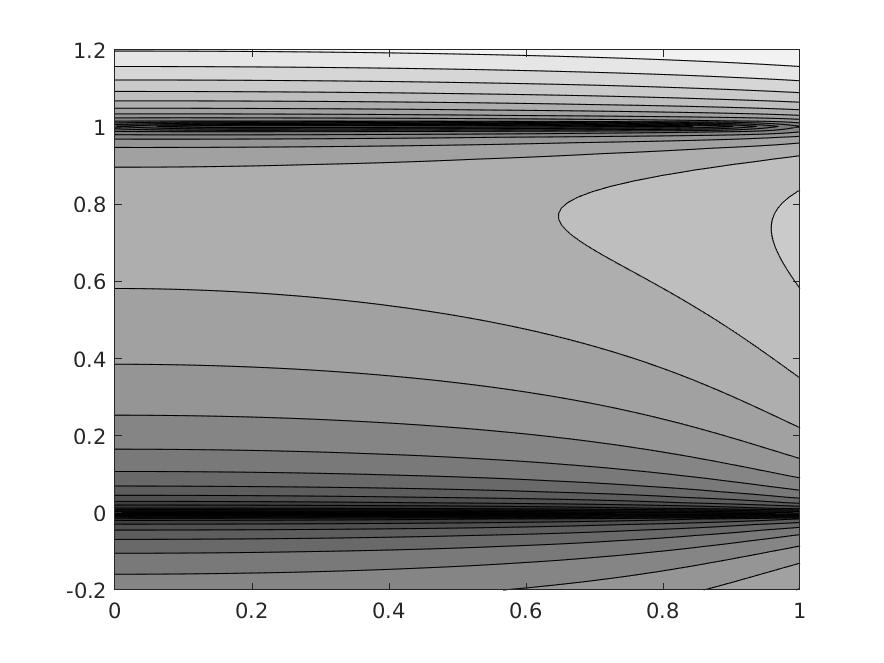} \includegraphics[width=.45\textwidth]{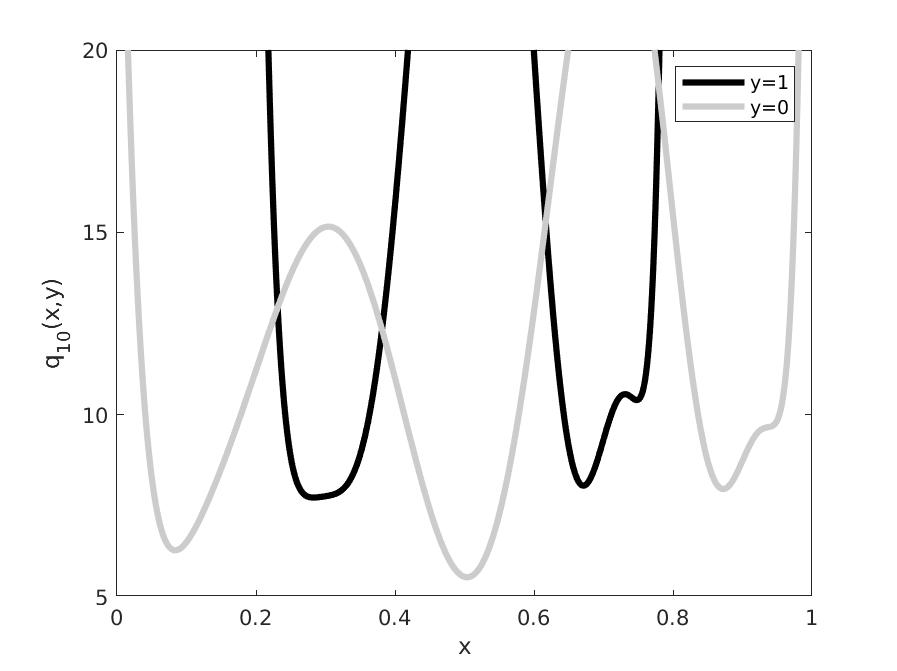}
        \caption{Logarithmic level sets (gray, smaller values darker) of the CD polynomial $q_{10}(x,y)$ of degree $20$ (left), and its values at $y=0$ (light gray) and $y=1$ (black).}
        \label{fig-lmomentcd}
    \end{figure}

This example illustrates our solution in Section \ref{lmoment} to the L-moment problem when $h=1$ and hence the graph to be reconstructed is the indicator function of an unknown set $\K$. To compute the moments we choose a disconnected set $\K:=[1/5,2/5]\cup[3/5,4/5] \subset \XX:=[0,1]$. Solving the moment relaxation of order $r=10$ we obtain the approximate graph represented on Figure \ref{fig-lmomentgr}. We observe that it matches closely the indicator function of $\K$.

The level sets of the CD polynomial (obtained with the {\tt momgraph} function with regularization parameter $10^{-6}$) are represented on the left of Figure \ref{fig-lmomentcd}. We see that the CD polynomial $q_{10}(x,y)$ is small around $y=0$ and $y=1$, as expected. For each $x$, the value of $y$ for which $q_{10}(x,y)$ is minimal is the approximation $f_{10}(x)$ of the indicator function of $\K$. On the right of Figure \ref{fig-lmomentcd} we represent $q_{10}(x,y)$ for $y=0$ and $y=1$. For each $x$, the value $f_{10}(x)$ is the minimum of $q_{10}(x,0)$ and $q_{10}(x,1)$.

\section{Proofs}\label{proofs}
\subsection*{Proof of Theorem \ref{th1}}

As $\phi_{d_x,0}=\lambda_{d_x}$ for all $d_x\in\N^n$
and $\XX\subset [0,1]^n$, the marginal of $\phi$ on 
$\XX$ is the measure $\lambda$. Next,
disintegrate $\phi$ with respect to its marginal $\lambda$ on $\XX$ and its conditional $\varphi(\cdot\vert \xx)$ on $\YY$ given $\xx\in\XX$, i.e.:
\[\phi(A\times B)\,=\,\int_A\varphi(B\vert \xx)\,\lambda(d\xx),\qquad
A\in\mathscr{B}(\XX),\,B\in\mathscr{B}(\YY).\]
In particular,
\[\phi_{d_x,1}\,=\,\int_{\XX\times\YY} \xx^{d_x}y\,d\mu(\xx,y)\,=\,
\int_{\XX} \xx^{d_x}\,\left(\underbrace{\int_{\YY}y\,\varphi(dy\vert\xx)}_{h_1(\xx)}\right)\,d\lambda(\xx)\,=\,\int_{\XX} \xx^{d_x}\,h_1(\xx)\,d\lambda(\xx)
\]
for some measurable function $h_1$ on $\XX$. Therefore 
\[\int_{\XX} \xx^{d_x}\,f(\xx)\,d\lambda(\xx)\,=\,\mu_{d_x,1}\,=\,
\phi_{d_x}\,=\,\int_{\XX}\xx^{d_x}\,h_1(\xx)\,d\lambda(\xx),\quad\forall d_x\in\N^n.\]
As signed measures on compact sets are moment-determinate, one deduces that
$h_1=f$ for $\lambda$-almost-all $\xx$ in $\XX$. Next,
fix $(d_x,d_y)\in\N^{n+1}$, arbitrary and recall that $\XX\subset[0,1]^n\subset\R^n_+$. Then
\begin{eqnarray*}
\phi_{d_x,d_y}\,=\,\int \xx^{d_x}y^{d_y}\,d\phi(\xx,y)&=&
\int_{\XX} \xx^{d_x}\,\left(\int_{\YY} y^{d_y}\varphi(dy\vert \xx)\right)\,d\lambda(\xx)\\
&\geq&\int_{\XX} \xx^{d_x}\,\left(\int_{\YY}y\varphi(dy\vert \xx)\right)^{d_y}\,d\lambda(\xx)\quad\mbox{[by Jensen's inequality}]\\
&\geq&\int_{\XX} \xx^{d_x}\,h_1(\xx)^{d_y}\,d\lambda(\xx)
\,=\,\int_{\XX} \xx^{d_x}\,f(\xx)^{d_y}\,d\lambda(\xx)\,=\,\mu_{d_x,d_y}.
\end{eqnarray*}
(We can apply Jensen's inequality because $y^{d_y}$ is convex on $\YY\subset\R_+$ and $\XX\subset \R^n_+$.)
Therefore with $r\in\N$ fixed, arbitrary, summing up 
over all $(2d_x,2d_y)\in\N^{n+1}_{2r}$ yields 
\[{\rm trace}\:\M_r(\boldsymbol{\phi})\,=\,\displaystyle\sum_{(d_x,d_y)\in\N^{n+1}_r}\phi_{2d_x,2d_y}
\,\geq\,\displaystyle\sum_{(d_x,d_y)\in\N^{n+1}_r}\mu_{2d_x,2d_y}\,=\,{\rm trace}\:\M_r(\boldsymbol{\mu}),\]
the desired result. 

(Concerning Remark \ref{positive-0}. If $\XX\not\subset\R^n_+$ the above arguments still hold whenever 
$d_x$ is even (hence $\xx^{d_x}\geq0$) so that Jensen's inequality applies.) $\Box$

\subsection*{Proof of Theorem \ref{th-finite}}
(i) Let $\boldsymbol{\phi}=(\phi_{d_x,d_y})_{(d_x,d_y)\in\N^{n+1}_{2r}}$ be an arbitrary feasible solution.
From $\M_r(g_j\,\boldsymbol{\phi})\succeq0$, we may deduce
\begin{equation}    \label{bounds}
L_{\boldsymbol{\phi}}(y^{2r})\,\leq\,\gamma^{2r}\,;\quad L_{\boldsymbol{\phi}}(x_i^{2r})\,\leq\,1\,\quad i=1,\ldots,n\,.\end{equation}
Hence by \cite[Proposition 2.38, p. 41]{l15}, $\vert\phi_{d_x,d_y}\vert\leq 
\max[1,\gamma^{2r}]=:\rho_r$ for all $(d_x,d_y)\in\N^{n+1}_{2r}$, which implies that that feasible set is compact and \eqref{relax-primal} has an optimal solution $\boldsymbol{\phi}^r$ with $\mathscr{T}_s(\boldsymbol{\phi}^r)=\tau_r$.

(ii) Let $(\boldsymbol{\phi}^r)_{r\in\N}$ be a sequence of arbitrary optimal solutions of \eqref{relax-primal} as $r$ increases. 
With each $r$ fixed, recalling 
the definition of 
$\rho_r$ in the proof of (i), one has $\phi^r_{0,0}=1$ and:
\[\vert\phi^r_d\vert\,\leq\,\rho_t,\quad\forall 2t-1\, \leq\,\vert d\vert\,\leq 2t;\quad t=1,\ldots,r.\]
So for each $r\in\N$ let 
\[\vert\psi^r_d\vert\,:=\,\frac{\vert\phi^r_d\vert}
{\rho_t},\quad\mbox{for all $2t-1\leq \vert d\vert\leq 2t;\:t=1,\ldots,r$}.\]
By construction $\vert\psi^r_d\vert<1$ for all $d\in\N^{n+1}_{2r}$.
Hence by completing the finite sequence $\boldsymbol{\psi}^r$ with zeros to make it an infinite sequence indexed by $\N^{n+1}$, the infinite sequence $\boldsymbol{\psi}^r=(\psi^r_d)_{d\in\N^{n+1}}$ is an element of the unit ball $\mathbf{B}_1$ of the Banach space $\ell_\infty$ of uniformly bounded sequence. As this unit ball is weak-star sequentially compact, there exists an element
$\boldsymbol{\psi}^*\in\mathbf{B}_1$ and a subsequence $(r_j)_{j\in\N}$ such that $\boldsymbol{\psi}^{r_j}\stackrel{weak~\star}{\to}\boldsymbol{\psi}^*$ as $j\to\infty$. In particular,
\[\lim_{j\to\infty}\psi^{r_j}_d\,=\,\psi^*_d,\quad \forall d\in\N^{n+1},\]
and equivalently:
\begin{equation}
    \label{convergence}
    \lim_{j\to\infty}\phi^{r_j}_{k,\alpha}\,=\,
\rho_t\,\psi^*_d\,=:\,\phi^*_d,\quad \forall\, 2t-1\,\leq \vert d\vert\,\leq 2t\,;\:t=1,2,\ldots\end{equation}
In addition, the latter convergence  \eqref{convergence} implies
$\phi^*_d\geq0$ for all $d\in\N^{n+1}$, and 
\[\M_r(g_j\,\boldsymbol{\phi}^*)\,\succeq0,\quad j=0,1,\ldots,m+2\,,\:\forall 
r\in\N.\]
Next, recall that the quadratic module $Q(g)$ in \eqref{Q(g)}
is Archimedean and therefore 
by Putinar's Positivstellensatz \cite{putinar93}, $\boldsymbol{\phi}^*$ has a representing measure $\phi^*$ on $\XX\times\YY$.
Moreover, by \eqref{convergence} again,
\[\phi^*_{d_x,0}\,=\,\lambda_{d_x}\quad\mbox{and}\quad\phi^*_{d_x,1}\,=\,\mu_{d_x,1},\qquad\forall d_x\in\N^n,\]
and $\lim_{j\to\infty}\tau_{r_j}=\lim_{j\to\infty}
\mathscr{T}_s(\boldsymbol{\phi}^{r_j})=
\mathscr{T}_s(\boldsymbol{\phi}^*)$. Therefore since
$\tau_{r_j}\leq \mathscr{T}_s(\boldsymbol{\mu})$ for all $j$, we obtain
$\mathscr{T}_s(\boldsymbol{\phi}^*)\leq \mathscr{T}_s(\boldsymbol{\mu})$.
But by Lemma \ref{th1}, one concludes that
$\mathscr{T}_s(\boldsymbol{\phi}^*)=
\mathscr{T}_s(\boldsymbol{\mu})$, and
$\phi^*_d=\mu_d$ for all $d\in\N^{n+1}_{2s}$, i.e.,
\[\lim_{j\to\infty}\phi^{r_j}_d\,=\,
\mu_d\,=\,\int_{\boldsymbol{\Omega}}\xx^{d_x}\,f(\xx)^{d_y}\,d\lambda(\xx),\quad\forall d=(d_x,d_y)\in\N^{n+1}_{2s}.\]
As this limit does not depend on the particular converging subsequence $(\boldsymbol{\phi}^{r_j})_{j\in\N}$, we also obtain
the full convergence \eqref{th-finite-completion}.
$\Box$

\subsection*{Proof of Theorem \ref{th-infinite}}

The proof of (i) is almost a verbatim copy of that of Theorem \ref{th-finite}(i). 

(ii) Again as in the proof of Theorem \ref{th-finite}(ii) there is a subsequence $(r_j)_{j\in\N}$ and a sequence $\boldsymbol{\phi}^*$ such that \eqref{convergence} holds and $\boldsymbol{\phi}^*$ has a representing measure on $\XX\times\YY$.
In addition, recall that \eqref{bounds} holds for $\boldsymbol{\phi}^{r_j}$, and so
$\phi_d^{r_j}\leq \gamma^{2\lceil d_y/2\rceil}$ for all $d=(d_x,d_y)\in\N^{n+1}$. 

Hence letting
$\boldsymbol{\psi}:=(\psi_d)_{d\in\N^{n+1}}$  with $\psi_{d}=\gamma^{2\lceil d_y/2\rceil}$,
$\boldsymbol{\phi}^{r_j}\leq\boldsymbol{\psi}$ for all $j$. Moreover notice that $\mathscr{T}(\boldsymbol{\psi})<\infty$. 
Therefore by the Dominated Convergence Theorem \cite[p. 49]{ash}, $\mathscr{T}(\boldsymbol{\phi}^{r_j})\to \mathscr{T}(\boldsymbol{\phi}^*)$, as $j\to\infty$.
But recall that $\mathscr{T}(\boldsymbol{\phi}^r)\leq \mathscr{T}(\boldsymbol{\mu})$ for all $r$, and therefore $\mathscr{T}(\boldsymbol{\phi}^*)\leq \mathscr{T}(\boldsymbol{\mu})$. 

On the other hand, by Theorem \ref{th1}, the reverse inequality $\mathscr{T}(\boldsymbol{\phi}^*)\geq \mathscr{T}(\boldsymbol{\mu})$ holds (since $\phi^*_d\geq\mu_d$ for all $d$), which
yields $\mathscr{T}(\boldsymbol{\phi}^*)=\mathscr{T}(\boldsymbol{\mu})$. But from $\mathscr{T}(\boldsymbol{\phi}^*-\boldsymbol{\mu})=0$ we obtain
\[0\,=\,\sum_{d\in\N^{n+1}}\theta_d\,(\underbrace{\phi^*_d-\mu_d}_{\geq0})\,\Rightarrow\:
\phi^*_d=\mu_d,\quad\forall d\in\N^{n+1}.\]
Finally, as the limit $\phi^*_d=\mu_d$ does not depend on the particular converging subsequence $(r_j)_{j\in\N}$,
we conclude that the whole sequence converges, i.e., the desired result \eqref{th-infinite-completion} holds.  $\Box$

\section{Conclusion}

This paper deals with the problem of recovering the graph of a function when only partial moment information is available, here only zero-th and first-order moments, namely linear measurements of the function. We show 
how to recover the graph asymptotically by solving 
a hierarchy of semidefinite programs, with an appropriately chosen objective function that promotes sparsity. 

As shown in non trivial numerical experiments, our results 
suggest a perhaps counter-intuitive message. Namely we observe experimentally that
it is better 

- to approximate the graph $G=\{(\xx,f(\xx)) :\xx\in \XX\} \subset \XX\times\YY$ from moments of the measure supported on $G$ (a thin subset of $\XX\times\YY$) with the recovery technique of \cite{momgraph}, 

rather than 

- to approximate the function $f$ itself (e.g. via $L^2$-norm approximation by polynomials) from moments of the measure with density $f$ on $X$.

Moreover,
this is true even if \emph{all} exact moments of the measure with density $f$ are available, whereas only a few  exact moments of the measure supported on the graph $G$ are available (and the other 
must be appropriately approximated).

\section*{Acknowledgments}

The authors wish to thank Swann Marx and Edouard Pauwels for fruitful and helpful discussions on the topic.


\begin{thebibliography}{X}
\bibitem{ash}
R. Ash. Real Analysis and Probability, Academic Press Inc., San Diego, 1972

\bibitem{L-moment}
A.I. Akhiezer and M.G. Krein. Some Questions in the Theory of Moments, Kharkov, 1938, Amer. Math. Soc., Providence, R.I. 1962.

\bibitem{mosek}
E. D. Andersen, K. D. Andersen.
The MOSEK interior point optimizer for linear programming: an implementation of the homogeneous algorithm.
In H. Frenk, K. Roos, T. Terlaky, S. Zhang (Editors). High Performance Optimization.
Kluwer Academic Publishers, 2000.


\bibitem{anderson87}
E. J. Anderson, P. Nash. Linear programming in infinite-dimensional spaces: theory and applications. Wiley, New York, NY, 1987.


\bibitem{barvinok}
A. Barvinok.
A Course in Convexity.
American Math. Soc., Providence, R.I., 2002.

\bibitem{CF14}
E. J. Cand\`es, C. Fern\'andez-Granda.
Towards a mathematical theory of superresolution.
Communications on Pure and Applied Mathematics, 67(6):906-956, 2014.

\bibitem{dCG12}
Y. de Castro, F. Gamboa.
Exact reconstruction using Beurling minimal extrapolation.
Journal of Mathematical Analysis and Applications, 395(1):336-354, 2012.

\bibitem{dC17}
Y. de Castro, F. Gamboa, D. Henrion, J. B. Lasserre.
Exact solutions to Super Resolution on semi-algebraic domains in higher dimensions.
IEEE Transactions on Information Theory, 63(1):621-630, 2017.

\bibitem{eckhoff93}
K. S. Eckhoff.
Accurate and efficient reconstruction of discontinuous functions from truncated series expansions.
Math. Comput. 61(204):745-763, 1993.
 
\bibitem{gloptipoly}
D. Henrion, J. B. Lasserre, J. L\"ofberg.
GloptiPoly 3: moments, optimization and semidefinite programming.
Optimization Methods and Software 24(4-5):761-779, 2009.

\bibitem{hlm14}
D. Henrion, J. B. Lasserre, M. Mevissen. Mean squared error minimization for inverse moment problems. Applied Mathematics and Optimization, 70(1):83-110, 2014.

\bibitem{khl20}
D. Henrion, M. Korda, J. B. Lasserre.
The Moment-SOS Hierarchy: Lectures in Probability, Statistics, Computational Geometry, Control and Nonlinear PDEs, World Scientific, 2020.

\bibitem{l10} J. B. Lasserre. Moments, Positive Polynomials and Their Applications, Imperial College Press, 2010.

\bibitem{l15}
J. B. Lasserre.
An introduction to Polynomial and Semi-Algebraic Optimization,
Cambridge University Press, Cambridge, 2015.

\bibitem{momgraph}
S. Marx, E. Pauwels, T. Weisser, D. Henrion, J. B. Lasserre.
Tractable semi-algebraic approximation using Christoffel-Darboux kernel, Constructive Approximation, to appear. HAL Report 02085835, April 2019.

\bibitem{burgers}
S. Marx, T. Weisser, D. Henrion, J. B. Lasserre.
A moment approach for entropy solutions to nonlinear hyperbolic PDEs.
Mathematical Control and Related Fields, 10(1):113-140, 2020.

\bibitem{putinar93}
M. Putinar. Positive polynomial on compact semi-algebraic sets, Indiana Univ. Math. J. 42: 969--984, 1993

\bibitem{santambrogio}
F. Santambrogio.
Optimal transport for applied mathematicians --
Calculus of variations, PDEs and modeling.
Birkha\"user, 2015.

\bibitem{trefethen}
L. N. Trefethen.
Approximation theory and aproximation practice.
SIAM, 2013.

\end{thebibliography}
\end{document}